\documentclass[11pt, reqno]{amsart}
\usepackage{amsmath, amssymb, latexsym, enumerate,  graphicx, MnSymbol}
\usepackage{amscd,mathrsfs,epic,empheq,float}
\usepackage{bbm}
\usepackage[all]{xy}
 \usepackage{pdfsync}
 \usepackage{todonotes}

\usepackage{tikz}
\usetikzlibrary{arrows}
\usepackage{graphicx}
\usepackage[vcentermath]{youngtab}
\usetikzlibrary{decorations.markings}
\usetikzlibrary{fadings}
\usepackage{array}
\usepackage{multirow}
\usepackage{stmaryrd}
\usepackage[latin1]{inputenc}

\raggedbottom

\theoremstyle{definition}
\newtheorem{thm}{Theorem}[section]
\newtheorem{prop}[thm]{Proposition}

\newtheorem{lem}{Lemma}

\newtheorem{rem}{Remark}
\newtheorem{ex}[thm]{Example}

\newtheorem*{first-step}{First Step}
\newtheorem*{third-step}{Third Step}
\newtheorem*{second-step}{Second Step}

\newtheorem*{main idea}{Main idea}


\newtheorem{cl}{Claim}

\newcommand{\op}{\operatorname}

\newcommand{\tightoverset}[2]{%
  \mathop{#2}\limits^{\vbox to -.5ex{\kern-0.75ex\hbox{$#1$}\vss}}}
  
\newcommand{\tightunderset}[2]{%
  \mathop{#2}\limits_{\vbox to -.5ex{\kern-0.75ex\hbox{$#1$}\vss}}}

\newcommand\smvee{\raise0.9ex\hbox{$\scriptscriptstyle\vee$}}

\newcommand{\C}{{\rm C}}
\newcommand{\T}{{\rm T}}
\newcommand{\E}{{\rm E}}

\newcommand{\U}{{\rm U}}
\newcommand{\mm}{{\rm H}}
\newcommand{\B}{{\rm B}}

\newcommand{\M}{{\rm M}}

\newcommand{\e}{\varepsilon}

\newcommand{\A}{{\rm A}}
\newcommand{\F}{{\rm F}}

\newcommand{\li}{{\rm L}}
\newcommand{\X}{{\rm X}}
\newcommand{\Q}{{\rm Q}}

\newcommand{\Z}{{\rm Z}}

\newcommand{\G}{{\rm G}}
\newcommand{\<}{\langle}
\newcommand{\rr}{\rangle}

\date{}                                           
\newcount\cols
{\catcode`,=\active\catcode`|=\active
 \gdef\Young#1{\hbox{$\vcenter
 {\mathcode`,="8000\mathcode`|="8000
  \def,{\global\advance\cols by 1 &}%
  \def|{\cr
        \multispan{\the\cols}\hrulefill\cr
        &\global\cols=2 }%
  \offinterlineskip\everycr{}\tabskip=0pt
  \dimen0=\ht\strutbox \advance\dimen0 by \dp\strutbox
  \halign
   {\vrule height \ht\strutbox depth \dp\strutbox##
    &&\hbox to \dimen0{\hss$##$\hss}\vrule\cr
    \noalign{\hrule}&\global\cols=2 #1\crcr
    \multispan{\the\cols}\hrulefill\cr%
   }
 }$}}
\gdef\Skew(#1:#2){\hbox{$\vcenter
{\mathcode`,="8000\mathcode`|="8000
  \dimen0=\ht\strutbox \advance\dimen0 by \dp\strutbox
  \def\boxbeg{\vbox
    \bgroup\hrule\kern-0.4pt\hbox to\dimen0\bgroup\strut\vrule\hss$}%
  \def\boxend{$\hss\egroup\hrule\egroup}%
  \def,{\boxend\boxbeg}%
  \def|##1:{\boxend\vrule\egroup\nointerlineskip\kern-0.4pt
    \moveright##1\dimen0\hbox\bgroup\boxbeg}%
  \def\\##1\\##2:{\boxend\vrule\egroup\nointerlineskip\kern-0.4pt
    \kern ##1\dimen0\moveright##2\dimen0\hbox\bgroup\boxbeg}%
  \moveright#1\dimen0\hbox\bgroup\boxbeg#2\boxend\vrule\egroup
 }$}}
}

\title{Word reading is a crystal morphism}
\author{Jacinta Torres}

\dedicatory{To my friend Bea}

\begin{document}
\maketitle
\begin{abstract}
We observe that word reading is a crystal morphism. This leads us to prove that for $\op{SL}_{n}(\mathbb{C})$ the map from \textit{all} galleries to Mikovi\'c Vilonen cycles  is a surjective morphism of crystals. We also compute the fibers of this map in terms of the Littelmann path model.  
\end{abstract}

\section{Introduction}

Both the Littelmann path model \cite{pathmodel} and the set of Mikovi\'c Vilonen (MV) cycles \cite{mirkovicvilonen} \cite{bravermangaitsgory} give constructions of the crystal associated to a simple module of a connected reductive group $\G$ over the field of complex numbers $\mathbb{C}$. The path model consists of paths in the real vector space spanned by the weight lattice, and the elements of the set of MV cycles are certain closed subsets of the affine Grassmannian $\mathcal{G}$ of the group $\G^{\vee}$ that is Langlands dual to $\G$.\\

By consdering piecewise linear paths contained in the one-skeleton of the standard apartment in the affine building \cite{ronan} of $\G^{\vee}$ and interpreting them as \textsl{one-skeleton galleries}, Gaussent and Littelmann assigned a closed subset of the affine Grassmannian $\mathcal{G}$ \cite{ls} \cite{onesk} to each of these piecewise linear paths. They showed that if the path is LS then the closed subset associated to it is an MV cycle. (LS paths were introduced by Lakshmibai and Seshadri \cite{lsoriginal} and were the first paths to be studied \cite{lspaths}.) This association defines a bijection which was shown to be an isomorphism of crystals by Baumann and Gaussent \cite{baumanngaussent}. \\

We work with the special linear group $\G = \op{SL}_{n}(\mathbb{C})$. In this case piecewise linear paths are parametrised by combinatorial arrangements which we call \textsl{galleries} - with respect to this identification the set of LS paths corresponds to the set of semistandard Young tableaux with columns of maximal length $n-1$. In this setting, Gaussent, Littelmann and Nguyen showed that the closed subset associated to any gallery is an MV cycle \cite{knuth}. To do this they considered the monoid $\mathcal{W}_{n}$ of words in the alphabet $\mathcal{A}_{n}:= \{1, \cdots, n\}$ and the associated plactic monoid $\mathcal{P}_{n} = \mathcal{W}_{n}/ \sim$ which is defined as the quotient of $\mathcal{W}_{n}$ by the ideal $\sim$ generated by the following relations.

\begin{itemize}
\label{a}
\item[a.] For $x\leq y < z$, $y\hbox{ }x\hbox{ }z = y\hbox{ }z\hbox{ }x$.
\label{b}
\item[b.] For $x< y \leq z$, $x\hbox{ }z\hbox{ }y = x\hbox{ }y\hbox{ }z$.
\label{c}
\item[c.] The relation $1\hbox{ }\cdots \hbox{ } n = \o$, where $\o$ is the trivial word. 
\end{itemize}

Relations a. and b. are the well-known \textsl{Knuth relations} \cite{knuthrelations}. A gallery $\gamma$, and in particular a semi-standard Young tableau, defines a word $w(\gamma)$ in $\mathcal{W}_{n}$. The classes in the plactic monoid are in bijection with the set of words of semi-standard Young tableaux. To associate an MV cycle to any gallery the authors of \cite{knuth} show that the closed subset associated to a gallery $\gamma$ depends only on the class $[w(\gamma)] \in \mathcal{P}_{n}$ of its word in the plactic monoid. (Actually relation c. was overlooked in \cite{knuth}. In the appendix (Appendix \ref{appendix}) we show that the closed subset associated to two words related by it stays the same.)\\

Crystals, however, are not mentioned in \cite{knuth}. In this paper we show that their map is a surjective morphism of crystals and determine its fibers (Theorem \ref{main}). To do so we observe that, considering words as galleries, the map that assigns the word $w(\gamma)$ to the gallery $\gamma$ is a morphism of crystals (Proposition \ref{wordreadingisacrystalmorphism}). As a direct consequence we obtain that it is an isomorphism onto its image when restricted to each connected component.

\subsection*{Acknowledgements}
The author would like to thank Peter Littelmann for introducing her to the topic, St\'ephane Gaussent for useful discussions, Michael Ehrig and Bea Schumann for their comments, and Daniel Juteau for his helpful suggestions. The author would also like to thank both referees for their time and their skilful comments - and for an observation that led to writing the appendix. The author has been supported by the Graduate School 1269: Global structures in geometry and analysis - financed by the Deutsche Forschungsgemeinschaft; she has also been partially supported by the SPP1388. 

\section{Galleries, words and crystals}
\subsection{Galleries and their words}
The combinatorics developed in this section is related to the representation theory of the group $\op{SL}_{n}(\mathbb{C})$, where $n \in \mathbb{Z}_{>0}$ is a fixed positive integer. Throughout this paper, all representations will be representations of $\op{SL}_{n}( \mathbb{C})$. A \textit{shape} is a finite sequence of positive integers $\underline{d} = (d_{1}, \cdots, d_{r})$, each $d_{s}$ less than or equal to $n-1$. An arrangement of boxes of shape $\underline{d}$ is  an arrangement of $r$ columns of boxes such that column $s$ (read from right to left) has $d_{s}$ boxes.

\begin{ex}
An arrangement of boxes of shape (1,1,2,1).
\begin{align*}
\Skew(0:,,,|1:) 
\end{align*}
\end{ex}

\noindent
A \textit{gallery} of shape $\underline{d}$ is a filling of an arrangement of boxes of the given shape with letters from the ordered alphabet $\mathcal{A}_{n} := \{1, \cdots, n: 1< \cdots< n\}$ such that entries are strictly increasing along each column of boxes. We will denote the set of galleries of shape $\underline{d}$ by $\Gamma(\underline{d})$, the set of all galleries by $\Gamma$, and, given a gallery $\gamma$, we will denote its shape by $\underline{d}(\gamma)$.

\begin{ex}
A gallery of shape (1,1,2,1).
\begin{align*}
\Skew(0:\mbox{\tiny{3}},\mbox{\tiny{1}},\mbox{\tiny{5}},\mbox{\tiny{2}}|1:\mbox{\tiny{2}}) 
\end{align*}
\end{ex}

Let $\mathcal{W}_{n}$ denote the word monoid on $\mathcal{A}_{n}$. To a word $ w = a_{1}\hbox{ } \cdots \hbox{ } a_{k} \in \mathcal{W}_{n}$ is associated the gallery $\gamma_{a_{1}\hbox{ } \cdots \hbox{ } a_{k}}=\gamma_{w} := \Skew(0:a_{k}, \cdots, a_{1})$. The \textit{word} of a gallery of shape $(m)$ (this means it has a single column of length $m$) is the word in $\mathcal{W}_{n}$ that corresponds to reading its entries from top to bottom and writing them down from left to right. We will sometimes call galleries of shape $(m)$ \textsl{column} galleries. The word of an arbitrary gallery is the concatenation of the words of each of its columns read from right to left - concatenation of two galleries $\gamma_{2}* \gamma_{1}$ is done starting with $\gamma_{1}$ from the right. We denote the word of a gallery $\delta$ by $w(\delta)$. Note that if $w' \in \mathcal{W}_{n}$ is a word, then $w(\gamma_{w'}) = w'$.

\begin{ex}
The galleries $\beta = \Skew(0:\mbox{\tiny{3}},\mbox{\tiny{2}},\mbox{\tiny{1}},\mbox{\tiny{5}},\mbox{\tiny{2}})$ and $\gamma = \Skew(0:\mbox{\tiny{3}},\mbox{\tiny{1}},\mbox{\tiny{5}},\mbox{\tiny{2}}|1:\mbox{\tiny{2}})$ both have word 25123 $= w(\gamma) = w(\beta)$. 

\end{ex}

\subsection{Characters, cocharacters, weights, and coweights}
\label{charactersandweightssection}

In this section we recall some basic facts and establish some notation. First consider the group $\op{GL}_{n}(\mathbb{C})$ of invertible $n\times n$ matrices, and in it the maximal torus $\T_{\op{GL}_{n}}(\mathbb{C})$ of diagonal matrices. Then maximal tori for $\op{SL}_{n}(\mathbb{C}) = [\op{GL}_{n}(\mathbb{C}),\op{GL}_{n}(\mathbb{C})]$ and $\op{PSL}_{n}(\mathbb{C}) = \op{GL}_{n}(\mathbb{C})/ \mathbb{C}^{\times}\op{Id}$ are given by $\T_{\op{SL}_{n}}(\mathbb{C}):= \T_{\op{GL}_{n}}(\mathbb{C})\cap \op{SL}_{n}(\mathbb{C})$ and $\T_{\op{PSL}_{n}}(\mathbb{C}):= \op{can}(\T_{\op{GL}_{n}}(\mathbb{C}))$ respectively, where $\op{can}: \op{GL}_{n}(\mathbb{C}) \rightarrow \op{PSL}_{n}(\mathbb{C})$ is the canonical map.\\

 We want to look at paths in $\mathbb{V}:= \X\otimes_{\mathbb{Z}}\mathbb{R}$, where $\X = \X(\T_{\op{SL}_{n}}(\mathbb{C}))= \op{Hom}(\T_{\op{SL}_{n}}(\mathbb{C}),\mathbb{C}^{\times})$ is the set of characters of $\T_{\op{SL}_{n}}(\mathbb{C})$ and the corresponding full weight lattice. For this consider $\mathbb{R}^{n}$ with inner product $(-,-)$ and orthonormal basis $\{\varepsilon_{1}, \cdots, \varepsilon_{n}\}$. Then $ \mathbb{V} \cong \{w \in \mathbb{R}^{n}: (w,e_{1}+\cdots+e_{n}) = 0\} \cong \mathbb{R}^{n}/\mathbb{R}(e_{1}+\cdots + e_{n})$ and we make the following identifications: 
 
\begin{align*}
\X^{\vee} = \X^{\vee}(\T_{\op{SL}_{n}}(\mathbb{C}))) &= \op{Hom}(\mathbb{C}^{\times}, \T_{\op{SL}_{n}}(\mathbb{C})) \\
&= \{a_{1}\e_{1}+ \cdots + a_{n}\e_{n}: a_{i} \in \mathbb{Z}  ; \sum_{i = 1}^{n} a_{i} = 0\}= \mathbb{Z}\Phi^{\vee}, \\
\X = \X(\T_{\op{SL}_{n}}(\mathbb{C}))&= \op{Hom}(\T_{\op{SL}_{n}}(\mathbb{C}),\mathbb{C}^{\times}) = \bigoplus_{i=1}^{n}\mathbb{Z}\e_{i}/ \<\sum_{i=1}^{n} \e_{i}\rr \cong  \\
 \op{Hom}(\mathbb{C}^{\times}, \T_{\op{PSL}_{n}}(\mathbb{C})) &= \X^{\vee}(\T_{\op{PSL}_{n}}(\mathbb{C}))), \hbox{ and }\\
\X(\T_{\op{PSL}_{n}}(\mathbb{C})))& =  \op{Hom}(\T_{\op{PSL}_{n}}(\mathbb{C}),\mathbb{C}^{\times}) \\
&= \{a_{1}\e_{1}+ \cdots + a_{n}\e_{n}: a_{i} \in \mathbb{Z}; \sum_{i = 1}^{n} a_{i} = 0\}= \mathbb{Z}\Phi,
\end{align*}

where $\Phi$ and $\Phi^{\vee}$ are the sets of roots and coroots, respectively. The inner product $(-,-)$ restricts to the pairing between $\X$ and $\X^{\vee}$. In particular the root data $(\X(\T_{\op{SL}_{n}}(\mathbb{C}))), \X^{\vee}(\T_{\op{SL}_{n}}(\mathbb{C}))), \Phi, \Phi^{\vee})$ associated to\\ $(\op{SL}_{n}(\mathbb{C}), \T_{\op{SL}_{n}}(\mathbb{C})))$ is dual to that $(\X(\T_{\op{PSL}_{n}}(\mathbb{C}))), \X^{\vee}(\T_{\op{PSL}_{n}}(\mathbb{C}))), \Phi^{\vee}, \Phi)$ of $(\op{PSL}_{n}(\mathbb{C}), \T_{\op{PSL}_{n}}(\mathbb{C})))$. We choose the set of simple roots $\Delta = \{\alpha_{i} = \e_{i}-\e_{i+1}: 1 \leq i < n\}$, which in this case coincides with the corresponding set of simple coroots $\alpha_{i}^{\vee} = \alpha_{i} \in \Delta^{\vee}$. We write $\Phi^{+}$ and $\Phi^{\vee, +}$ for the corresponding sets of positive roots and coroots, respectively. The corresponding $i$-th fundamental weight is $\omega_{i} = \e_{1}+\cdots+ \e_{i}$, for $i \in \{1, \cdots, n-1\}$.  We will also consider the following hyperplane and half-spaces associated to a pair $(\alpha, n) \in \Phi \times \mathbb{Z}$:

\begin{align*}
\mm_{\alpha,n} = \{x \in \mathbb{V}: (\alpha,x) = n\}\\
\mm_{\alpha,n}^{+} = \{x \in \mathbb{V}: (\alpha,x) \geq n\}\\
\mm_{\alpha,n}^{+} = \{x \in \mathbb{V}: (\alpha,x) \leq n\}.
\end{align*}
\noindent
The dominant Weyl chamber is identified with the intersection $\bigcap_{\alpha_{i}\in \Delta}\mm_{\alpha_{i}, 0}$.

\subsection{Littelmann paths}
\label{litpathsection}
Each gallery defines a piecewise linear path in $\mathbb{V} \cong \X \otimes_{\mathbb{Z}}\mathbb{R}$ as follows. To a column with entries the integers $0 \leq l_{1}<\cdots <l_{k} \leq n$ we associate the path $\pi:[0,1]\rightarrow \mathbb{V}, t \mapsto t(\varepsilon_{l_{1}}+\cdots +\varepsilon_{l_{k}})$. The path associated to a gallery $\delta$ is defined to be the concatenation of the paths of its columns, beginning with the right-most one, just as when reading the word. We will denote it by $\pi_{\delta}$. A gallery $\delta$ is \textsl{dominant} if the image $\pi_{\delta}([0,1])$ of its corresponding path is contained in the dominant Weyl chamber. See Theorems \ref{thepathmodel} and \ref{pathmodel} below for the representation-theoretic meaning of paths. 

\begin{ex}
\label{example2}
Let $n = 3$. In the picture below (the shaded region is the dominant Weyl chamber), we see that the gallery $\nu = \Skew(0:\mbox{\tiny{1}},\mbox{\tiny{1}}|1:\mbox{\tiny{2}})$ is dominant while $\delta = \Skew(0:\mbox{\tiny{2}},\mbox{\tiny{3}},\mbox{\tiny{1}})$ is not. Note that $\pi_{\nu} = \pi_{\tiny{\hbox{$\Skew(0:\mbox{\tiny{1}}|0:\mbox{\tiny{2}})$}}}* \pi_{\hbox{\tiny{$\Skew(0:\mbox{\tiny{1}})$}}}$ and 
$\pi_{\hbox{\tiny{$\Skew(0:1)$}}}* \pi_{\hbox{\tiny{$\Skew(0:3)$}}}* \pi_{\hbox{\tiny{$\Skew(0:2)$}}}$.

\begin{center}
\begin{tikzpicture}
\draw[-, dotted] (-2,0) --(2,0);
\draw[-, dotted] (-2,0.86) --(2,0.86);
\draw[-, dotted] (-2,-0.86) --(2,-0.86);
\node[right] at (0.5, 0.86602540378){$\varepsilon_{1}$};
\draw[->, dotted] (0,0) --(0.5, 0.86602540378);
\fill[gray!20!, path fading=north] (0,0) --  (1,1.73205080756) -- (-1,1.73205080756) -- cycle;
\draw[->,dotted] (0,0) --(-1, 0);
\draw[-,dotted] (1,-1.73205080756) --(-1, 1.73205080756);
\draw[-,dotted] (2,-1.73205080756) --(0, 1.73205080756);
\draw[-,dotted] (0,-1.73205080756) --(-2, 1.73205080756);
\node[left] at (-1, 0){$\varepsilon_{2}$};
\draw[->, dotted] (0,0) --(0.5, -0.86602540378);
\draw[-, dotted] (1,1.73205080756) --(-1, -1.73205080756);
\draw[-, dotted] (2,1.73205080756) --(0, -1.73205080756);
\draw[-, dotted] (0,1.73205080756) --(-2, -1.73205080756);
\node[below] at (0.5, -0.86602540378) {$\varepsilon_{3}$};
\draw[-, blue] (0,0) --(0.5, 0.86602540378);
\draw[-, blue] (0.5, 0.86602540378) --(1,0);
\draw[-, blue] (1,0) -- (0,0);
\node[right] at (1, 0) {$\pi_{\delta}$\small{([0,1])} };
\draw[-, red] (0,0) --(-0.5, 0.86602540378);
\draw[-, red] (-0.5, 0.86602540378) -- (0,1.73205080756);
\node[above] at (0, 1.73205080756) {$\pi_{\nu}$\small{([0,1])} };
\end{tikzpicture}
\end{center}
\end{ex}

\begin{rem}
 The paths associated to galleries are examples of Littelmann paths, see \cite{pathmodel} and Theorem \ref{pathmodel} below. The images $\pi([0,1])$ of these paths are \textsl{one-skeleton galleries} in the standard apartment of the affine building of type A. This is explained in \cite{onesk}.
\end{rem}

\subsection{Crystals and representation theory}
We recall the crystal structure on the set of all galleries. We refer to \cite{bravermangaitsgory} and \cite{kashiwaraoncrystalbases}. For this section only, let $(\X, \X^{\vee}, \Phi, \Phi^{\vee})$ be a root datum,  $\G$ the corresponding complex reductive group, and $\Delta = \{\alpha_{i}: i \in \{1, \cdots, n-1\}\}$ a choice of simple roots.\\

A \textsl{crystal} is a set $\B$ of \textsl{vertices} together with maps
\begin{align*}
 e_{\alpha_i}, f_{\alpha_i}:& \B \rightarrow \B \cup \{0\} \mbox{  (the \textsl{root operators})},\\ \hbox{ and }\op{wt}: &\B \rightarrow \X 
\end{align*} 
 for each $i \in \{1, \cdots, n-1\}$ such that for every $b, b' \in \B$ and $i\in \{1, \cdots , n-1\}, b' = e_{\alpha_i}(b)$ if and only if $b = f_{\alpha_i}(b')$, and, in this case, setting $\epsilon_{\alpha_i}(b''):= \op{max}\{n: e_{\alpha_i}^{n}(b)\neq 0 \}$ and $\phi_{\alpha_i}(b''):=\op{max}\{n: f_{\alpha_i}^{n}(b'')\neq 0\}$ for any $b'' \in \B$, the following properties are satisfied.

\begin{enumerate}
\item $\op{wt}(b') = \op{wt}(b)+\alpha_{i}$
\item $\phi(b) = \epsilon_{\alpha_i}(b) +(\op{wt}(b), \alpha_{i}^{\vee})$
\end{enumerate}

\noindent
A crystal is in particular a graph, and is hence a disjoint union of its connected components. If $\B$ is a crystal and $b \in \B$ is a vertex we will denote the connected component of $\B$ in which it lies by $\op{Conn}(b)$. A \textsl{crystal morphism} is a map $\F: \B \rightarrow \B'$ between the underlying sets of two crystals $\B$ and $\B'$ such that $\op{wt}(\F(b)) = \op{wt}(b)$ and such that it commutes with the action of the root operators. A crystal morphism is an isomorphism if it is bijective. Given an integrable module $M$ of the quantum group $U_{q}(\mathfrak{g})$ of the Lie algebra $\mathfrak{g}$ of $\G$, Kashiwara constructed a crystal $\B_{M}$ that is the ``combinatorial skeleton'' of $M$ \cite{kashiwaraoncrystalbasesoriginal}. If $M= \li(\lambda)$ is a simple module of highest weight $\lambda \in \X^{+}$ then $\B_{M}$ is a connected crystal denoted by $\B(\lambda)$, which has the property that there exists a unique element $b_{\lambda} \in \B(\lambda)$ such that $e_{i}b_{\lambda} = 0$ for all $i \in \{1, \cdots , n-1\}$.  Such an element is called a \textsl{highest weight vertex}. The crystal $\B(\lambda)$ also has the characterising property that $\op{dim}(\li(\lambda)_{\mu}) = \#\{b \in \B(\lambda): \op{wt}(b) = \mu\}$. If $M = \bigoplus_{i = 1}^{m}\li(\lambda_{i})$ is semisimple, then the connected components of $\B_{M}$ contain exactly one highest weight vertex of $\B_{M}$ each. They are in one-to-one correspondence with the crystals $\B(\lambda_{i})$ that correspond to the simple summands $\li(\lambda_{i})$ of $\M$.

\subsection{Crystal structure on the set of galleries}
Let $\gamma$ be a gallery of shape $\underline{d} = (d_{1}, \cdots, d_{r} )$. Define $\op{wt}(\gamma):= \pi_{\gamma}(1) \in \X$. Note that this is well defined. In Example \ref{example2}, $\op{wt}(\nu) = 2\epsilon_{1}+\epsilon_{2}$ and $\op{wt}(\delta) = \epsilon_{1}+\epsilon_{2} +\varepsilon_{3} = 0$. In general $\op{wt}(\gamma) = \sum_{i=1}^{n}\lambda_{i}\epsilon_{i}$, where $\lambda_{i} := \#\{i's \hbox{ in } \gamma \}$. We recall the action of the root operator $f_{\alpha_i}$ (respectively $e_{\alpha_i}$) on $\gamma$. The definition we provide here is a straightforward generalization of the crystal operators on Young tableaux given in \cite{kashiwaraoncrystalbases} and a translation of the crystal operators on paths  \cite{pathmodel} or galleries \cite{ls}. See also Section 7.4 of \cite{hongandkang}.

\subsubsection{The action of the root operators $e_{\alpha_{i}}, f_{\alpha_{i}}$}
\begin{itemize}
\item[a.] Tag the columns of $\gamma$ with a sign $\sigma \in \{+,-, \emptyset\}$ in the following way. If both $i$ and $i+1$ appear in a column or if they do not appear, the column is tagged with a $(\emptyset)$. If only $i$ appears, it is tagged with a $(+)$, and if only $i+1$ appears, with a $(-)$. The resulting sequence of tags is sometimes called the \textit{i-signature} of $\gamma$.

\item[b.] Ignore the $(\emptyset)$-tagged columns to produce a sub-gallery, and then ignore all pairs of consecutive columns tagged $(-\hbox{ } +)$, and get another sub-gallery. Continue this process, recursively obtaining sub-galleries, until a final sub-gallery is produced with tags of the form
$$(+)^{s}(-)^{r}.$$
To apply the operator $f_{\alpha_i}$ (resp. $e_{\alpha_i}$), modify the column corresponding to the right most $(+)$ (resp. left most $(-)$) in the final sub-gallery tags, and replace the entry $i$ with $i+1$ (resp. $i+1$ with $i$). If $s = 0$ (resp. $r = 0$), then $f_{\alpha_i}(\gamma) = 0$ (resp. $e_{\alpha_i}(\gamma) = 0$). 
\end{itemize}

It is easy to check that the above operations define a crystal structure on the set of galleries $\Gamma$.

\begin{ex}
To apply the crystal operator $f_{\alpha_2}$ to
\begin{align*}
\gamma = \Skew(0:\mbox{\tiny{3}},\mbox{\tiny{1}},\mbox{\tiny{5}},\mbox{\tiny{2}}|1:\mbox{\tiny{2}}),
\end{align*}
one obtains that the corresponding taggings of the columns read from left to right are $-+\emptyset+$. The first sub-gallery obtained is 

\begin{align*}
\Skew(0:\mbox{\tiny{3}},\mbox{\tiny{1}}, \mbox{\tiny{2}}|1:\mbox{\tiny{2}}),
\end{align*}
which is tagged by $-++$. The next sub-gallery is then $\Skew(0:2)$, hence 
\begin{align*}
f_{\alpha_2}(\gamma) = \Skew(0:\mbox{\tiny{3}},\mbox{\tiny{1}},\mbox{\tiny{5}},\mbox{\tiny{3}}|1:\mbox{\tiny{2}}) .
\end{align*}
We also obtain that $f_{\alpha_1}(\gamma) = 0$.
\end{ex}

\subsection{Word reading and paths}
\subsubsection{The path model}
We begin this section with what is known as the Littelmann path model. Theorem \ref{thepathmodel} below is proven (in a more general context) as Theorem 7.1 in \cite{pathmodel}.

\begin{thm}
\label{thepathmodel}
If $\gamma \in \Gamma$ is a dominant gallery, then $\op{Conn}(\gamma) \cong \B(\op{wt}(\gamma))$. 
\end{thm}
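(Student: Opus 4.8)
The plan is to deduce the statement from the path model of \cite{pathmodel} by transporting the combinatorial crystal structure on galleries to Littelmann's crystal structure on paths along the assignment $\gamma \mapsto \pi_{\gamma}$. First I would record two elementary structural facts. Since the segment associated to a column depends only on the set $\{l_{1} < \cdots < l_{k}\}$ of its entries, and a strictly increasing column is determined by this set, the map $\gamma \mapsto \pi_{\gamma}$ is injective on galleries of a fixed shape $\underline{d}$. Moreover the operator $f_{\alpha_{i}}$ (resp.\ $e_{\alpha_{i}}$) only replaces a single entry $i$ by $i+1$ (resp.\ $i+1$ by $i$) in a column containing $i$ but not $i+1$ (resp.\ $i+1$ but not $i$), so the result is again a strictly increasing column of the same length. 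Hence the shape $\underline{d}(\gamma)$ is constant on $\op{Conn}(\gamma)$, and $\gamma \mapsto \pi_{\gamma}$ is injective on $\op{Conn}(\gamma)$.

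The heart of the argument is to verify that $\gamma \mapsto \pi_{\gamma}$ intertwines the combinatorial operators of Section 2.6 with Littelmann's root operators on paths. For a fixed $i$ consider the piecewise linear function $h(t) := (\pi_{\gamma}(t), \alpha_{i}^{\vee})$. Because $\alpha_{i}^{\vee} = \alpha_{i} = \e_{i} - \e_{i+1}$ and $(\e_{j}, \e_{i} - \e_{i+1}) = \delta_{ji} - \delta_{j,i+1}$, the segment coming from a column changes $h$ by $+1$ if the column contains $i$ but not $i+1$, by $-1$ if it contains $i+1$ but not $i$, and by $0$ otherwise. Thus, reading the columns, the $i$-signature of $\gamma$ records the sequence of unit up-, down-, and flat-steps of $h$ (up to the orientation convention relating the left-to-right tag order to the order in which the path traverses the columns). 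I would then check that the recursive cancellation of $(-\,+)$ pairs computes the last point at which $h$ attains its global minimum, so that modifying the column of the right-most surviving $(+)$ (resp.\ left-most surviving $(-)$) reproduces Littelmann's operator, which reflects the relevant portion of the path about the minimal value; the vanishing conditions $s = 0$ and $r = 0$ match $\phi_{\alpha_{i}} = 0$ and $\epsilon_{\alpha_{i}} = 0$. This is the standard bracketing description of the path operators in the integral, unit-step case, and it is the step I expect to require the most care: one must fix the orientation convention correctly and track that the reflected path is again of the form $\pi_{\gamma'}$ for a gallery $\gamma'$, with the combinatorial and continuous ``$=0$'' conditions in agreement.

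Granting this intertwining, I would finish as follows. A gallery $\gamma$ is dominant precisely when $(\pi_{\gamma}(t), \alpha_{i}^{\vee}) \geq 0$ for all $i$ and all $t$; by the step count this forces the surviving $i$-signature to have no $(-)$, so $e_{\alpha_{i}}(\gamma) = 0$ for every $i$, and $\gamma$ is a highest weight vertex with $\op{wt}(\gamma) = \pi_{\gamma}(1)$ a dominant weight. Theorem 7.1 of \cite{pathmodel} then states that the path crystal generated from the dominant path $\pi_{\gamma}$ by the root operators is isomorphic to $\B(\op{wt}(\gamma))$. The intertwining makes $\gamma \mapsto \pi_{\gamma}$ a morphism of crystals, so it sends $\op{Conn}(\gamma)$ into this path crystal; it is injective by the first paragraph, and it is surjective because any sequence of operators realising a vertex of the (connected) path crystal from $\pi_{\gamma}$ lifts, step by step, to the same sequence applied to $\gamma$ without hitting $0$, the vanishing conditions agreeing on both sides. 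A bijective crystal morphism is an isomorphism, giving $\op{Conn}(\gamma) \cong \B(\op{wt}(\gamma))$.
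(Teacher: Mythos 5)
Correct, and essentially the paper's own route: the paper offers no proof of this statement beyond citing it as Theorem 7.1 of \cite{pathmodel} (proven there in greater generality), read through the identification of a gallery $\gamma$ with its Littelmann path $\pi_{\gamma}$. Your proposal supplies precisely the translation that the paper leaves implicit --- the verification that $\gamma \mapsto \pi_{\gamma}$ intertwines the combinatorial signature operators with Littelmann's path operators and matches the dominance and vanishing conditions --- which the paper only asserts in passing when it calls its operators ``a translation of the crystal operators on paths''.
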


\subsubsection{Word reading}
The following proposition is very important for our purposes. It is well known for semistandard Young tableaux (see for example \cite{kashiwaraoncrystalbases}, Section 5.3). Let $\underline{d} = (d_{1}, \cdots, d_{r})$ be a shape, $l_{d} = \sum_{j=1}^{r}d_{j}$ the number of boxes in the arrangement of boxes of shape $\underline{d}$ and $\underline{l_{d}} = \underbrace{(1,\cdots, 1)}_{l_{d}-times}$.

\begin{prop}
\label{wordreadingisacrystalmorphism}
The map
\begin{align*}
\Gamma(\underline{d}) &\longrightarrow \Gamma(\underline{l_{d}})\\
\gamma &\longmapsto \gamma_{w(\gamma)}
\end{align*}
 is a crystal morphism.
\end{prop}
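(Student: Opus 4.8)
The plan is to verify the two conditions defining a crystal morphism: preservation of weight, and commutation with the root operators $e_{\alpha_i}$ and $f_{\alpha_i}$. The weight condition is immediate, since passing from $\gamma$ to $\gamma_{w(\gamma)}$ only redistributes the boxes into single columns and leaves the multiset of entries unchanged; thus $\op{wt}(\gamma_{w(\gamma)}) = \sum_{i}\lambda_{i}\e_{i} = \op{wt}(\gamma)$, where $\lambda_{i} = \#\{i\text{'s in }\gamma\}$. The heart of the matter is therefore commutation with the root operators, which I would reduce to a single statement comparing the $i$-signatures of $\gamma$ and of $\gamma_{w(\gamma)}$, for each fixed $i$.

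The key lemma I would prove is that the reduced $i$-signature of $\gamma_{w(\gamma)}$ coincides, sign for sign, with that of $\gamma$. Order the columns of $\gamma$ from left to right. Since $w(\gamma)$ is the concatenation of the column words read from right to left and $\gamma_{w(\gamma)}$ reverses the word, the single boxes arising from a fixed column $C$ of $\gamma$ occupy a \emph{consecutive} block in $\gamma_{w(\gamma)}$, and within that block the entries appear in decreasing order (that is, $C$ read from bottom to top). I would then match the tag of $C$ against the tags of its block: if $C$ contains neither $i$ nor $i+1$ the block is entirely $\emptyset$; if $C$ contains only $i$ (respectively only $i+1$) the block contributes a single $+$ (respectively $-$), matching the tag of $C$; and if $C$ contains both $i$ and $i+1$ then, as these are consecutive integers, they occupy adjacent boxes of $C$, so after the reversal they appear as the adjacent pair $-\,+$ in the block (the box holding $i+1$ to the left of the box holding $i$), every other box being $\emptyset$.

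The tag-reduction in part b.\ of the crystal rule is exactly the reduction of a word in $+,-$ by repeated deletion of adjacent $(-\,+)$ pairs, and this reduction is confluent: the surviving unmatched signs, and their positions, are independent of the order of cancellation, and an adjacent $(-\,+)$ is always matched to itself under the standard left-to-right bracketing. Hence I may first cancel, inside each both-column block, its internal adjacent pair $-\,+$; what then remains is precisely the reduced signature of $\gamma$, and the unmatched $+$'s and $-$'s of $\gamma_{w(\gamma)}$ are exactly the single boxes lying in the only-$i$ and only-$(i+1)$ blocks that carry the unmatched tags of $\gamma$. In particular $\epsilon_{\alpha_i}$ and $\phi_{\alpha_i}$ agree on $\gamma$ and on $\gamma_{w(\gamma)}$.

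From the lemma it follows that $f_{\alpha_i}$ and $e_{\alpha_i}$ act in corresponding places: the rightmost unmatched $+$ of $\gamma_{w(\gamma)}$ is the single box holding the letter $i$ of the column of $\gamma$ on which $f_{\alpha_i}$ acts, and replacing that $i$ by $i+1$ in $\gamma$ alters exactly the corresponding letter of $w(\gamma)$, so that $\gamma_{w(f_{\alpha_i}(\gamma))} = f_{\alpha_i}(\gamma_{w(\gamma)})$; the degenerate case $f_{\alpha_i}(\gamma)=0$ corresponds to $s=0$, which then also holds for $\gamma_{w(\gamma)}$, and symmetrically for $e_{\alpha_i}$ with $r=0$. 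I expect the main obstacle to lie in the bookkeeping of the key lemma, namely tracking the order-reversal induced by word reading and arguing cleanly, via confluence of the bracket cancellation, that the two $\emptyset$-producing situations for a column of $\gamma$ — containing both $i$ and $i+1$ versus containing neither — become indistinguishable after splitting. Everything else is a direct unwinding of the definitions.
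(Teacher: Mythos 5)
Your proposal is correct and takes essentially the same route as the paper's proof: both hinge on comparing the $i$-signature of $\gamma$ with that of $\gamma_{w(\gamma)}$ column by block (a $(+)$- or $(-)$-tagged column contributing the same single sign, and an $\emptyset$-tagged column contributing either all $\emptyset$'s or an adjacent, hence cancelling, $(-\,+)$ pair), together with the observation that applying $f_{\alpha_i}$ to a $(+)$-tagged column replaces $i$ by $i+1$ with no reordering, so exactly one letter of the word changes. The only cosmetic difference is that the paper packages this as a reduction to single-column galleries (following Kashiwara), whereas you track the acting position directly via confluence of the cancellation procedure, which the paper leaves implicit.
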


\begin{proof}
First note that since the weight of a gallery only depends on the entries of its boxes, $\op{wt}(\gamma) = \op{wt}(\gamma_{w(\gamma)})$. If two single column galleries $\gamma_{1}, \gamma_{2}$ are labelled by $(+)$ and $(-)$ respectively, then the word associated to their concatenation $\gamma_{2}*\gamma_{1}$ is in turn labelled by $(-\hbox{ }+)$. If the gallery $\gamma$ is not labelled, then $\gamma_{w(\gamma)}$  is labelled either by $(-\hbox{ }+)$ or by $\emptyset$. It is therefore enough to show that for any $i\in \{1, \cdots, n-1\}$ and any gallery $\gamma$ of shape $(m), f_{\alpha_i}(w(\gamma)) = w(f_{\alpha_i}(\gamma))$. This is shown in \cite{kashiwaraoncrystalbases}, Section 5.3, Proposition 5.1. We give a proof nevertheless, for the comfort of the reader.\\
 
 Let $\gamma$ be a column gallery of shape $(m)$ with entries $1 \leq a_{1}< \cdots < a_{m} \leq n$ and $i \in \{1, \cdots, n-1\}$. If $\gamma$ is labelled by $(\emptyset)$ or by $(-)$ then $f_{\alpha_i}(w(\gamma)) = w(f_{\alpha_i}(\gamma)) = 0$. If $\gamma$ is labelled by $(+)$, then, for some $k \in \{1, \cdots, r\}$, $a_{k} = i$ and since the column is labelled by only a $(+), a_{k+1}> a_{k}+1$. Hence, $f_{\alpha_i}(\gamma)$ is obtained from $\gamma$ by replacing $i = a_{k}$ by $i+1$, with no need of reordering the entries, and therefore $f_{\alpha_i}(w(\gamma)) = w(f_{\alpha_i}(\gamma))$.
\end{proof}

Proposition \ref{wordreadingisacrystalmorphism} allows an enhanced version of Theorem \ref{thepathmodel} which we state in Theorem \ref{pathmodel} (it is well-known but the author has not found an explicit reference). To prove it we need the following lemma which characterizes dominant galleries as highest weight vertices. 

\begin{lem}
\label{highestweightvertex}
A gallery $\nu \in \Gamma$ is dominant if and only if $e_{\alpha_{i}}(\nu) = 0$ for all $i \in \{1, \cdots, n-1\}$. 
\end{lem}

\begin{proof}
Let $\nu \in \Gamma$ be a gallery. First notice the following two things.
\begin{itemize}
\item[1.] Since entries are strictly increasing in columns, the gallery $\nu$ is dominant if and only if $\gamma_{w(\nu)}$ is dominant.
\item[2.] For a word $w \in \mathcal{W}_{n}$, the condition $e_{i}(\gamma_{w}) = 0$ for all $i \in \{1, \cdots, n-1\}$ means that to the right of each $i+1$ in $\gamma_{w}$ is at least one $i$. This is equivalent to $\gamma_{w}$ being dominant. 
\end{itemize}
Now assume that $e_{i}(\nu) = 0$ for all $i \in \{1, \cdots, n\}$. By Proposition \ref{wordreadingisacrystalmorphism} this is equivalent to $e_{i}(\gamma_{w(\nu)}) = 0$ for all $i \in \{1, \cdots, n\}$, which by 2. above is equivalent to $\gamma_{w(\nu)}$ being dominant, which is in turn equivalent to $\nu$ being dominant by 1. above. 

\end{proof}

\begin{thm}[The type A path model]
\label{pathmodel}
The connected components of $\Gamma$ are all of the form $\op{Conn}(\delta) \cong \B(\op{wt}(\delta))$ for a dominant gallery $\delta$. 
\end{thm}

\begin{proof}
By Theorem \ref{thepathmodel} it is enough to show that for every gallery $\nu$ there is a dominant gallery $\delta \in \op{Conn}(\nu)$ that belongs to the same connected component as $\nu$. To see this consider a gallery $\nu \in \Gamma(\underline{d})$  of shape $\underline{d}$. Its word, seen as the gallery $\gamma_{w(\nu)}$, lies in the crystal $\Gamma(\underline{l_{d}})$. As is explained in Section 13 of \cite{placticalgebra}, this is the crystal $\B_{M}$ associated to the representation $\M :=\li(\omega_{1})^{\otimes l(w(\nu))}$, where $l(w(\nu))$ is the length of the word $w(\nu)$. The representation $\M$ is semisimple, hence $\gamma_{w(\nu)}$ lies in a connected component $ \op{Conn}(\gamma_{w(\nu)}) \cong \B(\lambda)$ isomorphic to the crystal associated to a simple module $\li(\lambda)$ of highest weight $\lambda \in \X^{+}$, with highest vertex $b_{\lambda} \in \Gamma(\underline{l_{d}})$. Proposition \ref{wordreadingisacrystalmorphism} implies that $\op{Conn}(\nu) \cong \op{Conn}(\gamma_{w(\nu)})$ - hence there exists a gallery $\delta \in \Gamma(\underline{d})$ such that $\gamma_{w(\delta)} = b_{\lambda}$. In particular, since $\gamma_{w(\delta)}$ is a highest weight vertex, by Lemma \ref{highestweightvertex} it is dominant, hence by 1. in the proof of Lemma \ref{highestweightvertex}, so is $\delta$.
\end{proof}

\begin{ex}
\label{wordreadingexample}
 A connected crystal of galleries of shape $(2,1)$ and the crystal formed by its word-readings, regarded as galleries, in the case $n=3$. Both crystals are isomorphic to  the crystal $\B(\omega_{1}+\omega_{2})$ associated to the simple module $\li(\omega_{1}+\omega_{2})$ for $\op{SL}_{3}(\mathbb{C})$. 
\begin{align*}
\xymatrix{
&\mbox{\Skew(0:\mbox{\tiny{1}},\mbox{\tiny{1}}|1:\mbox{\tiny{2}})}\ar^{1}[dl]\ar^{2}[dr]&\\
\mbox{\Skew(0:\mbox{\tiny{2}},\mbox{\tiny{1}}|1:\mbox{\tiny{2}})}\ar^{2}[d]&&\mbox{\Skew(0:\mbox{\tiny{1}},\mbox{\tiny{1}}|1:\mbox{\tiny{3}})}\ar^{1}[d]\\
\mbox{\Skew(0:\mbox{\tiny{2}},\mbox{\tiny{1}}|1:\mbox{\tiny{3}})}\ar^{2}[d]&&\mbox{\Skew(0:\mbox{\tiny{1}},\mbox{\tiny{2}}|1:\mbox{\tiny{3}})}\ar^{1}[d]\\
\mbox{\Skew(0:\mbox{\tiny{3}},\mbox{\tiny{1}}|1:\mbox{\tiny{3}})}\ar^{1}[dr]&&\mbox{\Skew(0:\mbox{\tiny{2}},\mbox{\tiny{2}}|1:\mbox{\tiny{3}})}\ar^{2}[dl]\\
&\mbox{\Skew(0:\mbox{\tiny{3}},\mbox{\tiny{2}}|1:\mbox{\tiny{3}})}&
}
&\hbox{        }&
\xymatrix{
&\mbox{\Skew(0:\mbox{\tiny{1}},\mbox{\tiny{2}},\mbox{\tiny{1}})}\ar^{1}[dl]\ar^{2}[dr]&\\
\mbox{\Skew(0:\mbox{\tiny{2}},\mbox{\tiny{2}},\mbox{\tiny{1}})}\ar^{2}[d]&&\mbox{\Skew(0:\mbox{\tiny{1}},\mbox{\tiny{3}},\mbox{\tiny{1}})}\ar^{1}[d]\\
\mbox{\Skew(0:\mbox{\tiny{2}},\mbox{\tiny{3}},\mbox{\tiny{1}})}\ar^{2}[d]\ar^{2}[d]&&\mbox{\Skew(0:\mbox{\tiny{1}},\mbox{\tiny{3}},\mbox{\tiny{2}})}\ar^{1}[d]\\
\mbox{\Skew(0:\mbox{\tiny{3}},\mbox{\tiny{3}},\mbox{\tiny{1}})}\ar^{1}[dr]&&\mbox{\Skew(0:\mbox{\tiny{2}},\mbox{\tiny{3}},\mbox{\tiny{2}})}\ar^{2}[dl]\\
&\mbox{\Skew(0:\mbox{\tiny{3}},\mbox{\tiny{3}},\mbox{\tiny{2}})}&
}
\end{align*}
\end{ex}

\subsection{Equivalence of galleries}
We say that a gallery of shape $\underline{d} = (d_{1}, \cdots, d_{r})$ is a \textit{semi-standard Young tableau} if $d_{1}\leq \cdots \leq d_{r}$ and if the entries are weakly increasing from left to right in rows. We will denote the set of all semi-standard Young tableaux of shape $\underline{d}$ by $\Gamma(\underline{d})^{\op{SSYT}}$.

\begin{ex}
The gallery
\begin{align*}
\Skew(0:\mbox{\tiny{1}}, \mbox{\tiny{2}}, \mbox{\tiny{2}}| 0: \mbox{\tiny{4}})
\end{align*}
is a semi-standard Young tableau. Note that the galleries considered in Example \ref{wordreadingexample} are not.  
\end{ex}

 We say that two galleries $\gamma, \delta$ are \textit{equivalent} ($\gamma \sim \delta$) if there exists a crystal isomorphism $\phi: \op{Conn}(\gamma) \tightoverset{\sim}{\longrightarrow} \op{Conn}(\delta)$ such that $\phi(\gamma) = \delta$.  The \textsl{plactic monoid} is the quotient $\mathcal{P}_{n} =\mathcal{W}_{n}/\sim$ of $\mathcal{W}_{n}$ by the ideal $\sim$ generated by the following relations. 

\begin{itemize}
\label{a}
\item[a.] For $x\leq y < z$, $y\hbox{ }x\hbox{ }z = y\hbox{ }z\hbox{ }x$.
\label{b}
\item[b.] For $x< y \leq z$, $x\hbox{ }z\hbox{ }y = x\hbox{ }y\hbox{ }z$.
\label{c}
\item[c.] The relation $1\hbox{ } \cdots \hbox{ } n = \o$, where $\o$ is the trivial word.
\end{itemize}

\noindent
If two words have equal classes in the plactic monoid, we say they are \textsl{plactic equivalent}. 

\begin{lem}
\label{knuthremark}
Two galleries $\delta$ and $\gamma$ are equivalent if and only if their words $w(\delta)$ and $w(\gamma)$ are plactic equivalent. 
\end{lem}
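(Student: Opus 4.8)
The plan is to push the whole question onto words, record that both equivalence relations are congruences for concatenation, and then match them relation by relation, the Knuth relations a.\ and b.\ being exactly the $\op{GL}_{n}$-content of the statement and relation c.\ accounting precisely for the passage from $\op{GL}_{n}$ to $\op{SL}_{n}$. For the \emph{reduction to words}: by Proposition \ref{wordreadingisacrystalmorphism} the word-reading map $\gamma\mapsto\gamma_{w(\gamma)}$ is a crystal morphism which restricts to an isomorphism on each connected component (as used in the proof of Theorem \ref{pathmodel}); composing the isomorphisms $\op{Conn}(\gamma)\xrightarrow{\sim}\op{Conn}(\gamma_{w(\gamma)})$ and $\op{Conn}(\delta)\xrightarrow{\sim}\op{Conn}(\gamma_{w(\delta)})$ shows that $\gamma\sim\delta$ if and only if $\gamma_{w(\gamma)}\sim\gamma_{w(\delta)}$. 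It therefore suffices to prove, for words $v,v'\in\mathcal{W}_{n}$, that $\gamma_{v}\sim\gamma_{v'}$ if and only if $[v]=[v']$ in $\mathcal{P}_{n}$; here the $\gamma_{v}$ range over the word crystals $\Gamma(\underline{l})\cong\B(\omega_{1})^{\otimes m}$ (Section 13 of \cite{placticalgebra}), concatenation of word-galleries corresponding to the tensor product.

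\emph{Both relations are congruences, and the easy direction.} The plactic congruence is a congruence on $\mathcal{W}_{n}$ by construction; I first check that $\sim$ is as well, i.e.\ that $\gamma_{u}\sim\gamma_{u'}$ implies $\gamma_{u}*\eta\sim\gamma_{u'}*\eta$ and $\eta*\gamma_{u}\sim\eta*\gamma_{u'}$ for every word-gallery $\eta$. Writing the galleries as tensors $b,b',c$, the tensor product rule gives $\op{Conn}(b\otimes c)\subseteq\op{Conn}(b)\otimes\op{Conn}(c)$, and a crystal isomorphism $\theta\colon\op{Conn}(b)\to\op{Conn}(b')$ with $\theta(b)=b'$ produces the crystal isomorphism $\theta\otimes\op{id}$, which carries $b\otimes c$ to $b'\otimes c$; restricting to connected components gives the claim, and symmetrically on the other side. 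Granting this, for the implication $[v]=[v']\Rightarrow\gamma_{v}\sim\gamma_{v'}$ it is enough to treat the generators. Relations a.\ and b.\ are the Knuth relations, whose compatibility with the crystal structure is classical (\cite{kashiwaraoncrystalbases}, Section 5.3; \cite{placticalgebra}) and can also be verified by hand in $\B(\omega_{1})^{\otimes 3}$. For relation c., a direct signature computation shows that $\gamma_{1\cdots n}$ has $i$-signature $(-)(+)$ for every $i$, so $e_{\alpha_{i}}(\gamma_{1\cdots n})=f_{\alpha_{i}}(\gamma_{1\cdots n})=0$; hence $\gamma_{1\cdots n}$ is an isolated vertex of weight $\varepsilon_{1}+\cdots+\varepsilon_{n}=0$, its component is $\cong\B(0)\cong\op{Conn}(\gamma_{\emptyset})$, and $\gamma_{1\cdots n}\sim\gamma_{\emptyset}$.

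\emph{The converse.} The operators $e_{\alpha_{i}},f_{\alpha_{i}}$ for $1\le i\le n-1$ depend only on the simple roots, so the underlying graph of $\B(\omega_{1})^{\otimes m}$, that is, its connected components together with the position of each vertex within its component, is the same whether we use the weight lattice of $\op{SL}_{n}$ or of $\op{GL}_{n}$; only the weight function changes, the $\op{GL}_{n}$-content refining the $\op{SL}_{n}$-weight by the class of $\varepsilon_{1}+\cdots+\varepsilon_{n}=\omega_{n}$. For the Knuth relations alone one has the classical statement that $v,v'$ are equivalent if and only if $\gamma_{v},\gamma_{v'}$ occupy the same position in $\op{GL}_{n}$-isomorphic components, i.e.\ have equal position and equal $\op{GL}_{n}$-highest weight. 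Now suppose $\gamma_{v}\sim\gamma_{v'}$ in the $\op{SL}_{n}$-sense; then $\gamma_{v},\gamma_{v'}$ have the same position and their $\op{GL}_{n}$-highest weights differ by some $c\,\omega_{n}$, say $\lambda=\mu+c\,\omega_{n}$ with $c\ge 0$. Since $\gamma_{1\cdots n}$ is $\op{GL}_{n}$-isolated of weight $\omega_{n}$, prepending $c$ copies of the word $1\cdots n$ to $v'$ tensors $\op{Conn}(\gamma_{v'})$ by a one-element crystal of weight $c\,\omega_{n}$, a graph isomorphism preserving positions and raising the highest weight to $\lambda$. Thus $\gamma_{v}$ and $\gamma_{(1\cdots n)^{c}v'}$ have the same position and equal $\op{GL}_{n}$-highest weight $\lambda$, so by the classical statement $v$ and $(1\cdots n)^{c}v'$ are related by relations a.\ and b.; finally relation c.\ removes the $c$ prepended factors, giving $[v]=[v']$ in $\mathcal{P}_{n}$.

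The step I expect to be most delicate is this last paragraph: one must be certain that passing from the $\op{GL}_{n}$- to the $\op{SL}_{n}$-crystal creates no identifications beyond those produced by an $\omega_{n}$-shift, and that such a shift is realized at the level of words by relation c.\ alone. This rests on the facts that the simple-root operators are common to both structures, that tensoring by the one-dimensional crystal $\B(\omega_{n})$ leaves the graph unchanged while on tableaux it merely prepends a full column, and that a connected component has a unique highest weight vertex (Lemma \ref{highestweightvertex}, Theorem \ref{pathmodel}), so that equal highest weight together with equal position pins the vertices down uniquely.
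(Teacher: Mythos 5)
Your proof is correct, and while your first step coincides with the paper's, the second half takes a genuinely different route. The paper's proof is two sentences: after the same reduction from galleries to words via Proposition \ref{wordreadingisacrystalmorphism} (word reading restricts to an isomorphism on each connected component), it simply cites Main Theorem C b.\ of \cite{placticalgebra}, which for $\mathfrak{sl}_{n}$ already asserts that plactic equivalence of words --- \emph{including} the column relation c.\ --- is equivalent to crystal equivalence of the associated word galleries. You instead assume only the classical $\op{GL}_{n}$ statement (Knuth equivalence $\Leftrightarrow$ same position in isomorphic components of the word crystal) and derive the $\op{SL}_{n}$ statement from it: the congruence property of $\sim$ via functoriality of the tensor product, the isolated-vertex computation showing $\gamma_{1\cdots n}\sim\gamma_{\o}$, and, for the converse, the observation that the $\op{GL}_{n}$ and $\op{SL}_{n}$ structures share the same root operators so that an $\op{SL}_{n}$-equivalence can only shift $\op{GL}_{n}$-highest weights by multiples of $\varepsilon_{1}+\cdots+\varepsilon_{n}$, a shift you then realize by tensoring with full columns and undo by relation c.; the rigidity point (a connected component has a unique highest weight vertex, and the graph determines all weights from it) correctly upgrades your graph isomorphism to a $\op{GL}_{n}$-isomorphism. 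What the paper's route buys is brevity, at the cost of outsourcing exactly the subtle point --- relation c.\ is the relation that was overlooked in \cite{knuth} and is the subject of the paper's appendix --- to Littelmann's $\mathfrak{sl}_{n}$-specific theorem. What your route buys is an explicit, self-contained account of how relation c.\ interacts with the crystal structure: the only identifications beyond Knuth come from determinant twists, which is conceptually the heart of the matter here. Do note that you rely on facts the paper never formalizes (concatenation of word galleries is the crystal tensor product, and isomorphisms tensor to isomorphisms); these are standard and your use of them is sound, but in a final write-up they deserve a citation or a line of proof.
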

\begin{proof}
Let $\delta$ and $\gamma$ be two galleries, and assume that their words $w(\delta)$ and $w(\gamma)$ are plactic equivalent. Then by Main Theorem C b. in \cite{placticalgebra} this is equivalent to $\gamma_{w(\delta)} \sim \gamma_{w(\gamma)}$. Proposition \ref{wordreadingisacrystalmorphism} implies that word reading induces isomorphisms of crystals $\op{Conn}(\nu) \tightoverset{\sim}{\longrightarrow} w(\op{Conn}(\nu)), \nu \mapsto \gamma_{w(\nu)}$ for any gallery $\nu$, where $w(\op{Conn}(\nu))$ is the crystal of all words of elements in $\op{Conn}(\nu)$. This concludes the proof. 
\end{proof}

\begin{rem}
Lemma \ref{knuthremark} implies that our definition of equivalence of galleries coincides with Definition 5 in \cite{knuth} (after adding the relation $1\cdots n = \o$).
\end{rem}

\begin{rem}
The crystal structure we have defined coincides with the usual crystal structure on the set of semi-standard Young tableaux (see \cite{hongandkang}, section 7.4).
\end{rem}

The following lemma is well-known (originally Theorem 6 in \cite{knuthrelations}) and similar to Theorem 1 in \cite{knuth}, but note that we have an extra restriction on the length of the longest column of the galleries we consider. The reason for this is that we consider representation theory of $\op{SL}_{n}(\mathbb{C})$, where we have only $n-1$ fundamental weights.

\begin{lem}
\label{ss}
Given any gallery $\gamma$ there exists a unique semi-standard Young tableau $\gamma_{\op{SS}}$ such that $\gamma \sim \gamma_{\op{SS}}$. 
\end{lem}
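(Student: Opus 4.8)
The plan is to deduce the lemma from Theorem \ref{pathmodel} together with the rigidity of the connected crystals $\B(\lambda)$, the point being that a dominant weight determines a partition shape all of whose columns have length at most $n-1$. For existence I would apply Theorem \ref{pathmodel} to $\gamma$, so that $\op{Conn}(\gamma)\cong\B(\lambda)$ with $\lambda=\op{wt}(\delta)$ for the dominant gallery $\delta\in\op{Conn}(\gamma)$ (unique by Lemma \ref{highestweightvertex}). Writing $\lambda=\sum_{i=1}^{n-1}c_i\omega_i$ with $c_i\in\mathbb{Z}_{\geq 0}$, let $\underline{d}_\lambda$ be the shape with $c_i$ columns of height $i$, arranged so that heights weakly increase; since $\omega_n=\varepsilon_1+\cdots+\varepsilon_n=0$ in $\X$, no column of height $n$ appears and $\underline{d}_\lambda$ is a legal shape. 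The filling $T_\lambda\in\Gamma(\underline{d}_\lambda)^{\op{SSYT}}$ whose $i$-th row consists of $i$'s is a dominant SSYT, so $\op{Conn}(T_\lambda)\cong\B(\lambda)$, and all of $\Gamma(\underline{d}_\lambda)^{\op{SSYT}}$ lies in this one component because our crystal structure restricts to the usual one on semi-standard Young tableaux. As connected crystals isomorphic to $\B(\lambda)$ carry a unique crystal isomorphism between them, I define $\gamma_{\op{SS}}$ to be the image of $\gamma$ under $\op{Conn}(\gamma)\to\op{Conn}(T_\lambda)$; it is semi-standard and $\gamma\sim\gamma_{\op{SS}}$ by definition of $\sim$.

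For uniqueness, suppose $\gamma_{\op{SS}},\gamma_{\op{SS}}'$ are both semi-standard with $\gamma\sim\gamma_{\op{SS}}$ and $\gamma\sim\gamma_{\op{SS}}'$; as $\sim$ is an equivalence relation we get $\gamma_{\op{SS}}\sim\gamma_{\op{SS}}'$. Equivalent galleries have isomorphic connected components and therefore the same dominant weight $\lambda$. The identity $\sum_i c_i\omega_i=\sum_k\big(\sum_{i\geq k}c_i\big)\varepsilon_k$ shows that $\lambda$ recovers the multiplicities $c_i$ and hence the shape $\underline{d}_\lambda$, so $\gamma_{\op{SS}}$ and $\gamma_{\op{SS}}'$ share the shape $\underline{d}_\lambda$ and both sit inside $\Gamma(\underline{d}_\lambda)^{\op{SSYT}}\cong\B(\lambda)$. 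A crystal isomorphism carrying $\gamma_{\op{SS}}$ to $\gamma_{\op{SS}}'$ is then an automorphism of $\B(\lambda)$; it fixes the unique highest weight vertex and commutes with the operators $f_{\alpha_i}$, which generate the whole crystal, so it is the identity and $\gamma_{\op{SS}}=\gamma_{\op{SS}}'$.

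The step I expect to be the main obstacle is the assertion that the semi-standard tableaux of a fixed partition shape form a single connected component isomorphic to $\B(\lambda)$ --- equivalently that the root operators preserve semistandardness within a shape and reach all of $\B(\lambda)$. This is the classical Kashiwara--Nakashima picture, which the text folds into the remark that its crystal structure agrees with the usual one on semi-standard Young tableaux; I would make that dependence explicit. A purely combinatorial alternative bypasses it: by Lemma \ref{knuthremark}, $\gamma\sim\gamma_{\op{SS}}$ amounts to plactic equivalence of $w(\gamma)$ and $w(\gamma_{\op{SS}})$, and the classical theorem of Knuth (Theorem 6 of \cite{knuthrelations}) says each plactic class contains the reading word of exactly one tableau. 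In either approach the only point beyond the classical statement is the bookkeeping forced by $\omega_n=0$, i.e.\ relation c., which strips off full columns of length $n$ and keeps every shape within the range allowed by the representation theory of $\op{SL}_n(\mathbb{C})$.
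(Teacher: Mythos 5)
Your proof is correct, but it takes a genuinely different route from the paper's. The paper argues purely combinatorially: it picks a representative $w$ of \emph{minimal length} in the plactic class of $w(\gamma)$, applies Robinson--Schensted--Knuth insertion to $w$, invokes Schensted's theorem \cite{schensted} (the longest column of the inserted tableau has length equal to the longest decreasing subsequence of $w$) together with the observation that a minimal-length representative cannot contain a decreasing subsequence of length $n$ --- this is exactly where relation c.\ is used --- to conclude that all columns of $\gamma_{\op{SS}}$ have length at most $n-1$, and then quotes Knuth's Theorem 6 \cite{knuthrelations} for uniqueness and Lemma \ref{knuthremark} to translate back to $\sim$. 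You instead argue crystal-theoretically: Theorem \ref{pathmodel} places $\gamma$ in a component isomorphic to $\B(\lambda)$, the Kashiwara--Nakashima description of $\B(\lambda)$ by semistandard tableaux (\cite{kashiwaranakashima}, \cite{hongandkang} Section 7.4) provides the component $\op{Conn}(T_\lambda)=\Gamma(\underline{d}_\lambda)^{\op{SSYT}}$, and rigidity of connected highest-weight crystals yields both the definition of $\gamma_{\op{SS}}$ and its uniqueness; the column bound $\leq n-1$ comes for free because a dominant $\op{SL}_n(\mathbb{C})$-weight is a nonnegative integral combination of $\omega_{1}, \cdots, \omega_{n-1}$ only. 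The paper's route buys an effective construction of $\gamma_{\op{SS}}$ (by insertion) and avoids relying on the tableau-crystal theorem, which this paper records only in a remark; your route buys never having to manipulate relation c.\ by hand. Two caveats. First, as you yourself flag, your argument leans on the assertion that $\Gamma(\underline{d}_\lambda)^{\op{SSYT}}$ is closed under the root operators and forms a single connected component; this is a legitimate classical input (on the same footing as the paper's reliance on \cite{schensted} and \cite{knuthrelations}), but it must be cited explicitly, since the paper's remark only asserts that the two crystal structures agree, not connectedness. Second, your closing ``purely combinatorial alternative'' is essentially the paper's actual proof, but as you sketch it, it is incomplete: stripping full columns of length $n$ (which are necessarily filled with $1, \cdots, n$) handles existence, whereas uniqueness in the presence of relation c.\ is not a consequence of Knuth's theorem alone --- Knuth's theorem controls classes under relations a.\ and b.\ only, and ruling out that relation c.\ could identify the words of two distinct legal tableaux is precisely what the paper's minimal-length-representative/Schensted argument accomplishes.
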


\begin{proof}
Let $\gamma$ be a gallery and let $w$ be a representative of minimal length of the class in the plactic monoid $\mathcal{P}_{n}$ of its word $w(\gamma)$. Let $\gamma_{\op{SS}}$ be the semistandard Young tableau obtained by applying Robinson-Schensted-Knuth insertion (see for example \cite{schensted}, second definition in Part I) to $w$ read from right to left (the reason for this is that we want to keep the word reading convention of \cite{knuth}). Now we use a result of C. Schensted (Theorem 2 in \cite{schensted}, the general version from Part II): the number of rows (or the length of the longest column) of $\gamma_{\op{SS}}$ equals the length of the longest decreasing subsequence of $w$ (read from right to left!). Since $w$ is a minimal length representative, we claim that it cannot have an decreasing subsequence of length n. Indeed, for any $i\leq n$, the relations a. and b. above imply that for $j \leq i, 1\cdots i j$ and $j 1\cdots i$ are plactic equivalent. Hence if $w$ has a subsequence of length $n$ it follows by induction that it is plactic equivalent to a word of the form $w_{1}1\cdots n w_{2}$, which is plactic equivalent to $w_{1}w_{2}$. The word $w_{1}w_{2}$ has length strictly less than that of $w$, which contradicts the minimality assumption on the length of $w$. Schensted's result then implies that $\gamma_{\op{SS}}$ has columns of length at most $n-1$. By Theorem 6, \cite{knuthrelations}, $\gamma_{\op{SS}}$ is the unique semistandard Young tableau such that its word $w(\gamma_{\op{SS}})$ is plactic equivalent to $w$. The latter is in turn plactic equivalent to $w(\gamma)$; hence, by Lemma \ref{knuthremark}, $\gamma \sim \gamma_{\op{SS}}$.
\end{proof}

\begin{ex}
For $n=3$, the galleries $\Skew(0:\mbox{\tiny{1}},\mbox{\tiny{1}}|1:\mbox{\tiny{2}})$,
 $\Skew(0:\mbox{\tiny{1}},\mbox{\tiny{2}},\mbox{\tiny{1}})$, and 
  $\Skew(0:\mbox{\tiny{1}},\mbox{\tiny{2}},\mbox{\tiny{1}}, \mbox{\tiny{3}}, \mbox{\tiny{2}}, \mbox{\tiny{1}})$
  are all equivalent to the semi-standard Young tableau $\Skew(0:\mbox{\tiny{1}}, \mbox{\tiny{1}}|0: \mbox{\tiny{2}}).$ 
\end{ex}

\section{Galleries and MV cycles}
\subsection{Setup and notation}

For a $\mathbb{C}$-algebra $\A$ and an algebraic group $\G$ consider its $\A$-rational points $\op{G}(\A):= \op{Mor}_{k-alg}(\mathbb{C}[\op{G}], \A)$, where $\mathbb{C}[\op{G}]$ is the coordinate ring of $\G$. We shall make abuse of notation and write $\op{SL}_{n}(\A), \T_{\op{SL}_{n}}(\A), \op{PSL}_{n}(\A)$ and $\T_{\op{PSL}_{n}}(\A)$ for the $\A$-rational points of the groups that we consider. We refer the reader to Chapter 13 of \cite{kumar} for proofs of the statements in this subsection.\\

Consider the map $p: \op{SL}_{n}(\mathbb{C}) \rightarrow \op{PSL}_{n}(\mathbb{C}) = \op{SL}_{n}(\mathbb{C})/ \mu_{n}$, where $\mu_{n}$ is the group of n-th roots of unity.   The set $\mathcal{G} = \op{PSL}_{n}(\mathbb{C}((t)))/ \op{PSL}_{n}(\mathbb{C}[[t]])$ is the affine Grassmannian associated to $\op{PSL}_{n}(\mathbb{C})$, where $\mathbb{C}((t))$ and $\mathbb{C}[[t]]$ are the $\mathbb{C}$-algebras of formal power series and Laurent power series, respectively. It carries the structure of an ind-variety; this means it is the direct limit of projective varieties, and that all the maps are closed immersions. Each cocharacter $\lambda \in \op{Mor}(\mathbb{C}^{\times}, \T_{\op{PSL}_{n}}(\mathbb{C}))$ determines a $\mathbb{C}$-algebra morphism $\mathbb{C}[\op{PSL}_{n}] \rightarrow \mathbb{C}[\mathbb{C}^{\times}] = \mathbb{C}[t,t^{-1}] \subset \mathbb{C}((t))$; there is actually a bijection $\op{Mor}(\mathbb{C}^{\times}, \T_{\op{PSL}_{n}}(\mathbb{C})) \overset{1:1}{\longleftrightarrow} \T_{\op{PSL}_{n}}(\mathbb{C}((t)))/\T_{\op{PSL}_{n}}(\mathbb{C}[[t]])$. We will write $t^{\lambda}$ for the point in $\mathcal{G}$ determined by the cocharacter $\lambda$. \\

	The group $\op{SL}_{n}(\mathbb{C}((t)))$ acts on $\mathcal{G}$ naturally via the map $p': \op{SL}_{n}(\mathbb{C}((t))) \rightarrow \op{PSL}_{n}(\mathbb{C}((t)))$ that is induced by $p$. See Section 6 of \cite{knuth} for a more complete discussion of this. The $\op{SL}_{n}(\mathbb{C}[[t]])$-orbits in $\mathcal{G}$ coincide with the $\op{PSL}_{n}(\mathbb{C}[[t]])$-orbits and are parametrised by the dominant integral weights $\X^{+} \subset \op{Mor}(\T_{\op{SL}_{n}}(\mathbb{C}), \mathbb{C}^{\times}) \cong \op{Mor}(\mathbb{C}^{\times}, \T_{\op{PSL}_{n}}(\mathbb{C}))$. Explicitly: $$\mathcal{G} = \bigcup_{\lambda \in \X^{+}} \op{SL}_{n}(\mathbb{C}((t))) t^{\lambda}.$$  To each dominant integral weight $\lambda \in \X^{+}$ is associated a projective variety $\X_{\lambda} \subset \mathcal{G}$ that is defined as the closure of the $\op{SL}_{n}(\mathbb{C}[[t]])$-orbit of $t^{\lambda}$ in $\mathcal{G}$, with respect to its topology as an ind-variety. \\
	
\subsection{Bott-Samelson varieties}
In this section we write $\T$ for $\T_{\op{SL_{n}}}(\mathbb{C})$. Let $\B \supset \T$ be the Borel subgroup of upper triangular matrices in $\op{SL}_{n}(\mathbb{C})$ and let $\U$ be its unipotent radical. It is generated by the images $\U_{\alpha}(a)$ of the one parameter subgroups $\U_{\alpha}: \mathbb{C}^{\times} \rightarrow \op{SL}_{n}(\mathbb{C}), a \mapsto \op{Id} + a\E_{ij}$ associated to the roots $\alpha = \epsilon_{i}-\epsilon_{j} \in \Phi$. Note that $p(\U)$ and $\U$ are isomorphic. For $\lambda \in \X^{+}$ and $\mu \in \X$ such that $\mu \leq \lambda \hbox{  }(\lambda - \mu = \bigoplus_{i = 1}^{n-1}\mathbb{Z}_{\geq 0}\alpha_{i})$, let  $\mathcal{Z}(\lambda)_{\mu}$ be the set of irreducible components of the closure 
$$\overline{\U(\mathbb{C}((t)))t^{\mu}\cap \op{PSL}_{n}(\mathbb{C}[[t)]]t^{\lambda}} = \overline{\U(\mathbb{C}((t)))t^{\mu}\cap \op{SL}_{n}(\mathbb{C}[[t)]]t^{\lambda}}.$$ The geometric Satake equivalence implies that $\mathcal{Z}(\lambda)_{\mu}$ can be identified with a basis for the $\mathbb{C}$-vector space $\li(\lambda)_{\mu}$ (see Corollary 7.4 in \cite{mirkovicvilonen}). The set $\mathcal{Z}(\lambda) = \bigcup_{\mu \leq \lambda}\mathcal{Z}(\lambda)_{\mu}$ is  the set of \textsl{MV cycles} in $\X_{\lambda}$; see Theorem 3.2 of \cite{mirkovicvilonen}. This set carries the structure of a crystal isomorphic to $\B(\lambda)$ \cite{bravermangaitsgory}.  For each $i\in \{1, \cdots, n-1\}$ we denote by $\tightoverset{\sim}{{e}_{i}}, \tightoverset{\sim}{{f}_{i}}$ the crystal operators on the set $\mathcal{Z}(\lambda)$ defined by Braverman and Gaitsgory  (see Section 3.3 of  \cite{bravermangaitsgory}).\\

To each shape $\underline{d}$ we assign the dominant integral weight $\lambda_{\underline{d}} = \omega_{d_{1}}+ \cdots + \omega_{d_{r}} \in \X^{+}$ as well as an affine Bott-Samelson desingularization $\Sigma_{\underline{d}} \overset{\pi_{\underline{d}}}{\longrightarrow} \X_{\lambda_{\underline{d}}}$ that is defined as follows. Let $\U_{\alpha,n}:\mathbb{C}^{\times}\rightarrow \op{SL}_{n}(\mathbb{C}((t)))$ be the one parameter subgroup defined by $b \mapsto \U_{\alpha}(bt^{n})$. For $i \leq r$ let  $\mu_i := \sum_{j\leq i}\omega_{d_{r-j+1}}$ and let $l_{i}$ be the line segment that joins $\mu_{i}$ and $\mu_{i+1}$. Let 

 $P_{d_{i}}$ be the subgroup of $\op{SL}_{n}(\mathbb{C}((t)))$ that is generated by the elements $\U_{\alpha,n}(b)$ for $b \in \mathbb{C}$, and such that $\mu_{i} \in \mm^{+}_{\alpha, n}$, and let $Q_{d_{i}}$ be the subgroup of  $P_{d_{i}}$  generated by the elements of the root subgroups for roots $(\alpha,n)$ such that the line segment $l_{i}$ joining $\mu_{i}$ and $\mu_{i+1}$ is contained in the corresponding hyperplane $l_{i} \subset \mm^{+}_{\alpha, n}$.\\

  The affine Bott-Samelson variety is defined as the quotient $\Sigma_{\underline{d}} := P_{0}\times \cdots \times P_{r} / Q_{0}\times \cdots \times Q_{r-1}  \times P_{r}$ of $P_{0}\times \cdots \times P_{r}$ by the left  action of the group $P_{r} \ Q_{0}\times \cdots \times Q_{r-1}  \times P_{r}$ given by:
$$
(q_{0}, \cdots, q_{r})\cdot (p_{0}, \cdots, p_{r}) = (p_{0}q_{0}, q_{0}^{-1}p_{1}q_{1}, \cdots, q_{r-1}^{-1}p_{r}q_{r}).
$$
It is well known that the quotient $\Sigma_{\underline{d}}$ is a smooth projective variety and that the map $\Sigma_{\underline{d}} \overset{\pi_{\underline{d}}}{\longrightarrow} \X_{\lambda_{\underline{d}}}$ defined by $[g_{0}, \cdots, g_{r}] \mapsto g_{0}\cdots g_{r-1}t^{\lambda_{\underline{d}}}$ has image $\X_{\lambda}$ and is a desingulatization \cite{onesk}.  The maximal torus $\T_{\op{SL}_{n}}(\mathbb{C})$ acts by multiplication on the left-most coordinate. The choice of a generic dominant coweight $\eta : \mathbb{C}^{\times} \rightarrow \T_{\op{SL}_{n}}(\mathbb{C})$ induces a $\mathbb{C}^{\times}$-action with set of fixed points in bijection with the set $\Gamma(\underline{d})$ of galleries of shape $\underline{d}$ (see Lemma 1 in \cite{knuth}). Given a gallery $\gamma \in \Gamma(\underline{d})$ we denote its corresponding Bialynicki-Birula cell by $\C_{\gamma} := \{x \in \Sigma_{\underline{d}}: \underset{t\rightarrow 0}{\op{lim }}\hbox{  }\eta(t)\cdot x = \gamma \} \subset \Sigma_{\underline{d}}$. One of the main results in \cite{knuth} (Theorem 2) establishes that the closure of the image $\pi_{\underline{d}}(\C_{\gamma})$ is an MV cycle in $\mathcal{Z}(\lambda_{\underline{d}(\gamma_{\op{SS}})})$, where  $\gamma_{\op{SS}}$ is the the semi-standard Young tableau $\gamma_{\op{SS}}$ from Lemma \ref{ss} associated to $\gamma$.

\subsection{Galleries and MV cycles}
The following theorem is the combination of Theorem 2 in \cite{ls} and Section 6 in \cite{onesk} for part a., and Theorem 25 in \cite{baumanngaussent} for part b..

\begin{thm}
\label{baumanngaussentlittelmann}
Let $\underline{d} = (d_{1}, \cdots, d_{r})$ be a shape such that $d_{1}\leq \cdots \leq d_{r}$ and consider the desingularization $\pi_{\underline{d}}: \Sigma_{\underline{d}} \rightarrow \X_{\lambda_{\underline{d}}}$. 
\begin{itemize}
\item[a.] If $\delta \in \op{SSYT}(\lambda_{\underline{d}})$ is a semi-standard Young tableau, the closure $\overline{\pi_{\underline{d}}(\C_{\delta})}$ is an MV cycle in $\mathcal{Z}(\lambda_{\underline{d}})$. This induces a bijection $\op{SSYT}(\lambda_{\underline{d}}) \overset{\varphi_{\underline{d}}}{\longrightarrow} \mathcal{Z}(\lambda_{\underline{d}})$. 
\item[b.] The bijection $\varphi_{\underline{d}}$ is a morphism of crystals. 
\end{itemize}
\end{thm}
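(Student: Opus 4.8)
The plan is to assemble the two cited results after a single combinatorial translation, since the statement is essentially a repackaging of the Gaussent--Littelmann and Baumann--Gaussent theorems in the language of semi-standard Young tableaux. First I would record the key dictionary: under the parametrisation of piecewise linear paths by galleries used throughout the paper, and with the hypothesis $d_{1}\leq \cdots \leq d_{r}$ so that $\lambda_{\underline{d}} = \omega_{d_{1}}+\cdots+\omega_{d_{r}}$ is dominant, a gallery $\delta \in \Gamma(\underline{d})$ is an LS gallery in the sense of \cite{ls}, \cite{onesk} if and only if it is a semi-standard Young tableau. This is the identification already announced in the introduction, and verifying it amounts to comparing the Lakshmibai--Seshadri chain condition with the defining weak-increase-along-rows condition on $\op{SSYT}(\lambda_{\underline{d}})$.

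For part a., with this dictionary in hand I would simply invoke the construction of \cite{ls} refined geometrically in Section 6 of \cite{onesk}: to each LS gallery $\delta$ the closure $\overline{\pi_{\underline{d}}(\C_{\delta})}$ of the image of its Bialynicki--Birula cell under the Bott--Samelson desingularization is an MV cycle in $\mathcal{Z}(\lambda_{\underline{d}})$, and $\delta \mapsto \overline{\pi_{\underline{d}}(\C_{\delta})}$ is a bijection onto $\mathcal{Z}(\lambda_{\underline{d}})$. Translating LS galleries into $\op{SSYT}(\lambda_{\underline{d}})$ gives precisely the asserted bijection $\varphi_{\underline{d}}$. If one wanted an independent check of surjectivity, one could note that geometric Satake forces $\#\mathcal{Z}(\lambda_{\underline{d}})_{\mu} = \dim \li(\lambda_{\underline{d}})_{\mu}$ by Corollary 7.4 of \cite{mirkovicvilonen}, and that this weight multiplicity is the number of semi-standard Young tableaux of the given shape and weight $\mu$ by Theorem \ref{thepathmodel}, so an injective $\varphi_{\underline{d}}$ is automatically bijective.

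For part b., I would invoke Theorem 25 of \cite{baumanngaussent}, which states that the bijection from LS galleries to MV cycles intertwines the Littelmann root operators on galleries with the Braverman--Gaitsgory crystal operators $\tightoverset{\sim}{{e}_{i}}, \tightoverset{\sim}{{f}_{i}}$ on $\mathcal{Z}(\lambda_{\underline{d}})$ and preserves weights. Since the root operators of Section 2.5 restrict on $\op{SSYT}(\lambda_{\underline{d}})$ to exactly these gallery operators, this says that $\varphi_{\underline{d}}$ commutes with the root operators and satisfies $\op{wt}(\varphi_{\underline{d}}(\delta)) = \op{wt}(\delta)$, i.e. it is a crystal morphism; being bijective by part a., it is in fact an isomorphism.

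The main obstacle is not any new geometry but the careful reconciliation of conventions across the three sources: one must check that the LS/one-skeleton gallery combinatorics of \cite{ls}, \cite{onesk}, the crystal operators on galleries used in \cite{baumanngaussent}, and the operators $e_{\alpha_{i}}, f_{\alpha_{i}}$ fixed in Section 2.5 of this paper all agree under the gallery-to-path parametrisation, and that the Bott--Samelson data $(P_{d_{i}}, Q_{d_{i}})$ together with the generic coweight $\eta$ defining $\C_{\delta}$ match those in the cited constructions. Once these identifications are pinned down the theorem follows formally.
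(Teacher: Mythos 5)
Your proposal matches the paper's treatment exactly: the paper gives no independent proof of Theorem \ref{baumanngaussentlittelmann}, but presents it as the combination of Theorem 2 in \cite{ls} and Section 6 of \cite{onesk} for part a.\ and Theorem 25 of \cite{baumanngaussent} for part b., with the LS-gallery/semi-standard-tableau dictionary you describe handled by citing Proposition 18 i.\ of \cite{onesk}. Your extra remarks (the Satake counting argument for surjectivity and the convention-matching caveat) are sound but not needed beyond these citations.
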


\begin{rem}
The set of one-skeleton LS galleries considered in \cite{onesk} coincides with the set of semi-standard Young tableaux (see Proposotion 18 $i.$ of \cite{onesk}).
\end{rem}

Let $\underline{d}$ be a shape. For $\lambda \in \X^{+}$, let $n_{\underline{d}}^{\lambda} = \# \{\gamma \in \Gamma(\underline{d})^{\op{dom}}: \lambda_{\underline{d}(\gamma)} = \lambda \}$ and let $\X^{+}_{\underline{d}}:= \{\lambda \in \X^{+}: n^{\lambda}_{\underline{d}} \neq 0\}$. Here $\Gamma(\underline{d})^{\op{dom}}$ is the set of all dominant galleries of shape $\underline{d}$. Fix $\lambda = \lambda_{1}\omega_{1}+\cdots + \lambda_{n-1}\omega_{n-1}$ and $\Z \in \mathcal{Z}(\lambda)_{\mu}$ for some $\mu \leq \lambda$. By Theorem \ref{baumanngaussentlittelmann} there exists a unique semi-standard Young tableau $\gamma^{\lambda}_{\mu, \Z} \in \op{SSYT}(\underline{\lambda}) \hbox{ of shape }\underline{\lambda}:= (d^{\lambda}_{1}, \cdots, d^{\lambda}_{k_{\lambda}})$, where $k_{\lambda}:= \sum_{i=1}^{n-1}\lambda_{i}$ and $d^{\lambda}_{j}:= i$ for $\lambda_{i-1} < j \leq \lambda_{i}, \lambda_{0}:=0$, such that $\varphi_{\underline{\lambda}}(\gamma^{\lambda}_{\mu, \Z}) = \Z$.

\begin{thm}
\label{main}
\begin{itemize}
\item[a.] The map 
\begin{align*}
\Gamma(\underline{d})& \overset{\varphi_{\underline{d}}}{\longrightarrow} \underset{\lambda \in \X^{+}_{\underline{d}}}{\bigoplus}\mathcal{Z}(\lambda)\\
\delta & \longmapsto \overline{\pi_{\underline{d}}(\C_{\delta})}
\end{align*}
is a well-defined surjective morphism of crystals. 
\item[b.] If $\C$ is a connected component of $\Gamma(\underline{d})$, the restriction $\varphi_{\underline{d}}|_{\C}$ is an isomorphism onto its image.  
\item[c.] The number of connected components $\C$ of $\Gamma (\underline{d})$ such that 
$\varphi_{\underline{d}}(\C) = \mathcal{Z}(\lambda)$ (for $\lambda \in \X^{+}_{\underline{d}}$) is equal to $n^{\lambda}_{\underline{d}}$.
\item[d.] The fibre $\varphi_{\underline{d}}^{-1}(\Z)$ is given by 
$$
\varphi_{\underline{d}}^{-1}(\Z) =    \{\gamma \in \Gamma(\underline{d}): \varphi_{\underline{d}}(\gamma)= \Z\} =  \{\gamma \in \Gamma(\underline{d}): \gamma \sim \gamma^{\lambda}_{\mu, \Z}\}
.$$
\end{itemize}
 We consider the direct sum ${\bigoplus}_{\lambda \in \X^{+}_{\underline{d}}}\mathcal{Z}(\lambda)$ in the category of crystals, regarding the $\mathcal{Z}(\lambda)$ as abstract crystals. 
\end{thm}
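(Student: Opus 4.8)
The plan is to factor the geometric map $\varphi_{\underline{d}}$ through the rectification map $\gamma \mapsto \gamma_{\op{SS}}$ of Lemma \ref{ss} and then read off all four assertions from the crystal structure. Write $\underline{\lambda}$ for the shape of $\gamma_{\op{SS}}$ and $\lambda = \lambda_{\underline{\lambda}}$ for the associated dominant weight. The single geometric input I would take from \cite{knuth} (Theorem 2) together with Theorem \ref{baumanngaussentlittelmann} is the identity
$$
\varphi_{\underline{d}}(\gamma) = \overline{\pi_{\underline{d}}(\C_{\gamma})} = \varphi_{\underline{\lambda}}(\gamma_{\op{SS}}),
$$
i.e. the MV cycle attached to $\gamma$ depends only on the plactic class of $w(\gamma)$ and coincides with the Baumann--Gaussent--Littelmann cycle of its semistandard representative. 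Granting this, well-definedness in part a. is immediate: $\varphi_{\underline{d}}(\gamma) \in \mathcal{Z}(\lambda)$, and $\lambda \in \X^{+}_{\underline{d}}$ because the dominant gallery $\delta \in \op{Conn}(\gamma)$ produced by Theorem \ref{pathmodel} has shape $\underline{d}$ and, since $\op{Conn}(\gamma) \cong \op{Conn}(\gamma_{\op{SS}}) \cong \B(\lambda)$, weight $\lambda$, so that $n^{\lambda}_{\underline{d}} \geq 1$.

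The heart of the argument, and the genuinely new crystal-theoretic point, is that rectification $\Psi\colon \gamma \mapsto \gamma_{\op{SS}}$ is itself a morphism of crystals. I would prove this directly from Lemma \ref{ss}: the relation $\gamma \sim \gamma_{\op{SS}}$ is witnessed by a crystal isomorphism $\phi\colon \op{Conn}(\gamma) \to \op{Conn}(\gamma_{\op{SS}})$ with $\phi(\gamma) = \gamma_{\op{SS}}$, and $\phi$ commutes with the root operators by definition. Hence $\phi(f_{\alpha_i}\gamma) = f_{\alpha_i}\gamma_{\op{SS}}$; since $f_{\alpha_i}\gamma_{\op{SS}}$ is again a semi-standard Young tableau of shape $\underline{\lambda}$ (semistandard tableaux are stable under the root operators), the uniqueness clause of Lemma \ref{ss} forces $(f_{\alpha_i}\gamma)_{\op{SS}} = f_{\alpha_i}\gamma_{\op{SS}}$, with the case $f_{\alpha_i}\gamma = 0$ handled identically, and likewise for $e_{\alpha_i}$; weights are preserved because $\gamma \sim \gamma_{\op{SS}}$. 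As $\varphi_{\underline{\lambda}}$ is a crystal morphism on semistandard tableaux by Theorem \ref{baumanngaussentlittelmann} b., the factorisation $\varphi_{\underline{d}} = \varphi_{\underline{\lambda}} \circ \Psi$ exhibits $\varphi_{\underline{d}}$ as a composite of crystal morphisms, completing the morphism claim in part a.

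Parts b. and c. then follow from generalities about connected crystals. A crystal morphism sends each connected component into a single connected component of the target and carries highest weight vertices to highest weight vertices; combined with weight preservation and Theorem \ref{pathmodel} this shows that a component $\C \cong \B(\lambda')$ is mapped into $\mathcal{Z}(\lambda')$, its dominant gallery going to the highest weight vertex. Since $\C \cong \B(\lambda') \cong \mathcal{Z}(\lambda')$ are connected crystals matched at their highest weight vertices, $\varphi_{\underline{d}}|_{\C}$ is an isomorphism onto $\mathcal{Z}(\lambda')$ (part b.); surjectivity of $\varphi_{\underline{d}}$ then follows because every $\lambda \in \X^{+}_{\underline{d}}$ is realised by a dominant gallery of shape $\underline{d}$, whose component maps onto $\mathcal{Z}(\lambda)$ (completing part a.). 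As each component contains exactly one dominant gallery, the number of components mapping onto $\mathcal{Z}(\lambda)$ equals the number of dominant galleries of shape $\underline{d}$ of weight $\lambda$, namely $n^{\lambda}_{\underline{d}}$ (part c.).

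For part d., fix $\Z \in \mathcal{Z}(\lambda)_{\mu}$ and recall that $\gamma^{\lambda}_{\mu,\Z}$ is the unique semistandard tableau of shape $\underline{\lambda}$ with $\varphi_{\underline{\lambda}}(\gamma^{\lambda}_{\mu,\Z}) = \Z$. If $\gamma \sim \gamma^{\lambda}_{\mu,\Z}$ then $\gamma_{\op{SS}} = \gamma^{\lambda}_{\mu,\Z}$ by uniqueness in Lemma \ref{ss}, whence $\varphi_{\underline{d}}(\gamma) = \varphi_{\underline{\lambda}}(\gamma_{\op{SS}}) = \Z$; conversely $\varphi_{\underline{d}}(\gamma) = \Z$ gives $\varphi_{\underline{\lambda}}(\gamma_{\op{SS}}) = \Z = \varphi_{\underline{\lambda}}(\gamma^{\lambda}_{\mu,\Z})$, and injectivity of $\varphi_{\underline{\lambda}}$ (Theorem \ref{baumanngaussentlittelmann} a.) together with Lemma \ref{knuthremark} yields $\gamma \sim \gamma_{\op{SS}} = \gamma^{\lambda}_{\mu,\Z}$. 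The step I expect to require the most care is the displayed identity $\varphi_{\underline{d}}(\gamma) = \varphi_{\underline{\lambda}}(\gamma_{\op{SS}})$: \cite{knuth} guarantees that $\overline{\pi_{\underline{d}}(\C_{\gamma})}$ lands in the correct stratum $\mathcal{Z}(\lambda)$, but pinning down that it is exactly the cycle of the rectified tableau, rather than merely some cycle of the right highest weight, is the true geometric content on which the otherwise purely crystal-theoretic bookkeeping above rests.
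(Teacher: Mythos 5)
Your proposal is correct and takes essentially the same route as the paper: both arguments factor $\varphi_{\underline{d}}$ through the rectification $\gamma \mapsto \gamma_{\op{SS}}$ of Lemma \ref{ss}, use Theorem \ref{thmGLN} b. (together with Lemma \ref{knuthremark}) to identify $\overline{\pi_{\underline{d}}(\C_{\gamma})}$ with the cycle of $\gamma_{\op{SS}}$, invoke Theorem \ref{baumanngaussentlittelmann} b. on the semistandard side, and deduce parts b.--d. from the decomposition of $\Gamma(\underline{d})$ into components of dominant galleries given by Theorem \ref{pathmodel}. Your explicitly isolated claim that rectification commutes with the root operators is precisely what the paper's chain of equalities $\overline{\pi_{\underline{d}}(\C_{r(\delta)})} = \overline{\pi_{\underline{d}(\delta_{SS})}(\C_{r(\delta_{SS})})} = \tightoverset{\sim}{r}\bigl(\overline{\pi_{\underline{d}(\delta_{SS})}(\C_{\delta_{SS}})}\bigr)$ encodes, via the observation that $r(\delta)\sim r(\delta_{SS})$ by the definition of equivalence.
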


\begin{proof}

Let $\underline{d}$ be a shape and $\delta \in \Gamma(\underline{d})$ as in the statement of the Theorem. By Lemma \ref{ss} there exists a unique semi-standard Young tableau $\delta_{SS}$  such that $\delta \sim \delta_{SS}$. By Theorem \ref{thmGLN} b. (Theorem 2 b. in \cite{knuth} up to a small correction, see the Appendix) and Lemma 2, 

\begin{align}
\label{proofeq1}
\overline{\pi_{\underline{d}}(\C_{\delta})} = \overline{\pi_{\underline{d}(\delta_{SS})}(\C_{\delta_{SS}})}.
\end{align}

Now let $r$ be a root operator. By definition of equivalence of galleries $r(\delta) \sim r(\delta_{SS})$. Note also that $\underline{d}(r(\delta)) = \underline{d}$ and $\underline{d}(r(\delta_{SS})) = \underline{d}(\delta_{SS})$. Lemma 2 and Theorem \ref{thmGLN} b. again imply 

\begin{align*}
\overline{\pi_{\underline{d}}(\C_{r(\delta)})} = \overline{\pi_{\underline{d}(\delta_{SS})}(\C_{r(\delta_{SS})})}.
\end{align*}
\noindent
Theorem \ref{baumanngaussentlittelmann} b. says that
\begin{align*}
 \overline{\pi_{\underline{d}(\delta_{SS})}(\C_{r(\delta_{SS})})} = \tightoverset{\sim}{r}(\overline{\pi_{\underline{d}(\delta_{SS})}(\C_{\delta_{SS}})}),
 \end{align*}
 \noindent
  and since (\ref{proofeq1}) implies $\tightoverset{\sim}{r}(\overline{\pi_{\underline{d}}(\C_{\delta})}) = \tightoverset{\sim}{r}(\overline{\pi_{\underline{d}(\delta_{SS})}(\C_{\delta_{SS}})})$, the proof of part a. of Theorem \ref{main} is complete.\\

Parts b., c., and d. are a direct consequence of Theorem \ref{pathmodel}: Indeed, since the action of the root operators does not affect the shape of a gallery, Theorem \ref{pathmodel} implies that the set $\Gamma(\underline{d})$ is a disjoint union 
$\Gamma(\underline{d})  = \bigsqcup_{\eta \in \Gamma(\underline{d})^{\op{dom}}} \op{Conn}(\eta)$. The above argument and Theorem \ref{baumanngaussentlittelmann} imply that $\varphi_{\underline{d}}(\op{Conn}(\eta)) = \mathcal{Z}(\op{wt}(\eta))$ for $\eta \in \Gamma(\underline{d})^{\op{dom}}$ and that $\varphi_{\underline{d}}$ is a crystal isomorphism onto its image when restricted to $\op{Conn}(\eta)$. 
\end{proof}

\section{Appendix}
\label{appendix}
Here we state Theorem 2 in \cite{knuth} with a small correction, which we prove. What is missing in the formulation given in \cite{knuth} is the relation $1\hbox{ } \cdots \hbox{ } n = \o$. The proof we provide shows the failure of Theorem \ref{thmGLN} without it. 

\begin{thm}
\label{thmGLN}
Let $\gamma$ be a gallery of shape $\underline{d}$, and let $\gamma_{SS}$ be the unique semistandard Young tableau such that the words $w(\gamma)$ and $w(\gamma_{SS})$ are plactic equivalent. Let $\underline{c}$ be the shape of $\gamma_{SS}$. Consider the Schubert varieties $\X_{\lambda_{\underline{c}}} \subset \X_{\lambda_{\underline{d}}}$ and the desingularizations $\pi_{\underline{d}}: \Sigma_{\underline{d}} \rightarrow \X_{\lambda_{\underline{d}}}$ and $\pi_{\underline{c}}: \Sigma_{\underline{c}} \rightarrow \X_{\lambda_{\underline{c}}}$.
\begin{itemize}
\item[a.] The closure $\overline{\pi_{\underline{d}}(\C_{\gamma})} \subset \X_{\lambda_{\underline{d}}}$ is an MV cycle in $\mathcal{Z}(\lambda_{\underline{c}})$.
\item[b.] Let $\gamma'$ be a second gallery of shape $\underline{d}'$. Then $\gamma \sim \gamma'$ if and only if $\overline{\pi_{\underline{d}}(\C_{\gamma})} = \overline{\pi_{\underline{d'}}(\C_{\gamma'})}$.
\end{itemize}
\end{thm}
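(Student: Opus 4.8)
The plan is to reduce both parts to the single assertion that the closure $\overline{\pi_{\underline{d}}(\C_{\gamma})}$ depends only on the class $[w(\gamma)] \in \mathcal{P}_{n}$ of the word in the plactic monoid. Granting this, part a. follows at once: by Lemma \ref{ss} the minimal-length representative of $[w(\gamma)]$ produces the semistandard Young tableau $\gamma_{SS}$ of shape $\underline{c}$ with columns of length at most $n-1$, so $\lambda_{\underline{c}}$ is a genuine dominant weight and $\overline{\pi_{\underline{c}}(\C_{\gamma_{SS}})} = \varphi_{\underline{c}}(\gamma_{SS})$ is an MV cycle in $\mathcal{Z}(\lambda_{\underline{c}})$ by Theorem \ref{baumanngaussentlittelmann} a.; the invariance then identifies $\overline{\pi_{\underline{d}}(\C_{\gamma})}$ with $\overline{\pi_{\underline{c}}(\C_{\gamma_{SS}})}$. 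The forward implication of part b. is immediate from the invariance together with Lemma \ref{knuthremark}, and for the converse I would invoke the bijectivity of $\varphi_{\underline{c}}$ in Theorem \ref{baumanngaussentlittelmann} a.: equality of the two MV cycles forces $\gamma_{SS} = \gamma'_{SS}$, whence $\gamma \sim \gamma'$.

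Since plactic equivalence is generated by the relations a., b., c., the invariance reduces to checking that each relation, applied locally to $w(\gamma)$, leaves $\overline{\pi_{\underline{d}}(\C_{\gamma})}$ unchanged. For the Knuth relations a. and b. this is precisely the content of the argument in \cite{knuth}, which I would recall rather than reprove: these relations correspond to combinatorial moves on galleries that are realized by isomorphisms of the relevant Bialynicki-Birula cells, under which the image in $\X_{\lambda_{\underline{d}}}$, and hence its closure, is preserved.

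The genuinely new point, and the one omitted in \cite{knuth}, is relation c., so this is where I would concentrate. The observation that makes it work is a weight computation: a column filled with all of $1, \ldots, n$ carries weight $\varepsilon_{1} + \cdots + \varepsilon_{n} = \omega_{n}$, which is $0$ in $\X = \X(\T_{\op{SL}_{n}}(\mathbb{C}))$. Consequently $t^{\omega_{n}} = t^{0}$ is the base point of $\mathcal{G}$, so inserting such a column adds a Bott-Samelson factor that maps trivially under $\pi$: in the definition $[g_{0}, \ldots, g_{r}] \mapsto g_{0}\cdots g_{r-1} t^{\lambda_{\underline{d}}}$ the extra factor composes with the identity coset, and neither the image of the cell nor its closure is affected. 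I would make this precise by comparing $\Sigma_{\underline{d}}$ with the Bott-Samelson space obtained by inserting the degenerate factor and identifying the two Bialynicki-Birula cells and their images.

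The hard part will be exactly this last identification, because a column of length $n$ is not a legal shape entry (shapes require $d_{s} \leq n-1$), so the degenerate factor lies outside the formalism as stated; I expect to have to either enlarge the Bott-Samelson construction to admit a length-$n$ factor and check directly that its parabolic contributes only the base point, or else argue entirely on the affine-Grassmannian side via the path and retraction picture, where a weight-$0$ segment is literally constant and so cannot alter the MV cycle. This same degeneracy is what exhibits the failure of the uncorrected statement: without relation c. the tableau produced by Robinson-Schensted-Knuth insertion may have a column of length $n$ (for example the word $n\,(n-1)\cdots 1$ inserts to a single such column), so $\underline{c}$ would fail to be a shape and $\lambda_{\underline{c}}$ would collapse to $0$ while the claimed ambient variety $\X_{\lambda_{\underline{c}}}$ degenerates. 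Relation c. is precisely what restores agreement by collapsing the full column, as guaranteed by Lemma \ref{ss}.
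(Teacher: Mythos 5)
Your global reduction is the same as the paper's: invariance of $\overline{\pi_{\underline{d}}(\C_{\gamma})}$ under each plactic relation, with the Knuth relations a.\ and b.\ quoted from \cite{knuth}, and parts a.\ and b.\ of the theorem then deduced from Theorem \ref{baumanngaussentlittelmann} and Lemma \ref{knuthremark} (your converse for b., via bijectivity of $\varphi_{\underline{c}}$, is fine). The paper, too, isolates relation c.\ as the only missing point. But your treatment of relation c.\ rests on a misreading of how that relation is realized by galleries, and then defers precisely the step that constitutes the mathematical content. In this formalism the word $1\cdots n$ is the gallery $\gamma_{1\cdots n}$ consisting of $n$ columns of a \emph{single box} each; a column of length $n$ is not a gallery at all (entries increase strictly and shapes require $d_{s}\leq n-1$), so no ``degenerate Bott--Samelson factor'' ever arises. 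What has to be proved is a statement entirely inside the legal formalism: for galleries $\gamma,\delta$, with $\underline{b}$ the shape of $\gamma*\delta$ and $\underline{a}$ the shape of $\eta:=\gamma*\gamma_{1\cdots n}*\delta$, one has $\overline{\pi_{\underline{a}}(\C_{\eta})}=\overline{\pi_{\underline{b}}(\C_{\gamma*\delta})}$. Your ``hard part'' (enlarging the Bott--Samelson construction to admit a length-$n$ factor, or a retraction argument in which ``a weight-$0$ segment is literally constant'') is a problem created by the misreading: the inserted path segment is a nonconstant closed loop, not a constant path, and neither of your two alternatives is actually carried out.

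More seriously, the weight computation carrying your whole argument ($\varepsilon_{1}+\cdots+\varepsilon_{n}=0$ in $\X$, hence $t^{\omega_{n}}=t^{0}$) is necessary but far from sufficient. In the explicit description of the cell image, with $k$ the number of columns of $\gamma$,
\begin{align*}
\pi(\C_{\eta}) = \T_{\eta}^{\leq k}\,\mathbb{U}_{\eta_{k+1}}(\underline{a^{k+1}})\cdots \mathbb{U}_{\eta_{k+n}}(\underline{a^{k+n}})\,\T_{\eta}^{\geq k+n+1},
\end{align*}
the inserted columns contribute not only a translation part (which is indeed trivial: $\op{wt}(\gamma_{1\cdots n})=0$ gives $\T_{\eta}^{\geq k+n+1}=\T_{\gamma*\delta}^{\geq k+1}$) but also the free unipotent parameters $\mathbb{U}_{\eta_{k+i}}(\underline{a^{k+i}})$, which a priori \emph{enlarge} the image. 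The crux of the paper's proof, absent from yours, is that these factors are absorbed: the sets $\Phi^{+}_{\eta_{k+i},\eta_{k+i+1}}$, $i=1,\ldots,n$, are pairwise disjoint, the corresponding root subgroups all lie in $\U_{\op{wt}(\gamma)}$, and that group stabilises the tail $\T_{\eta}^{\geq k+n+1}$ (Proposition 3 of \cite{knuth}). Only with this stabilisation does one get $\pi(\C_{\eta})=\T_{\gamma*\delta}^{\leq k}\T_{\gamma*\delta}^{\geq k+1}=\pi_{\underline{b}}(\C_{\gamma*\delta})$ and hence equality of closures. Until you supply this absorption argument (or a genuine substitute for it), the one point that distinguishes Theorem \ref{thmGLN} from Theorem 2 of \cite{knuth} remains unproved.
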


For the proof we need the following description of the image $\pi_{\underline{d}}(\C_{\gamma})$. Let $\gamma$ be a gallery of shape $\underline{d} = (d_{1}, \cdots, d_{r})$. Assume that the boxes of column $i$ (read from right to left) are filled in with integers $1\leq l^{i}_{1}< \cdots <l^{i}_{r_{i}} \leq n$. Define $\gamma_{0}:=0$ and let $l_{1}$ be the line segment that joins the origin and the point $\gamma_{1}:= \varepsilon_{l^{1}_{1}}+ \cdots +\varepsilon_{l^{1}_{r_{1}}}$. Define $\gamma_{j+1}:= \gamma_{j}+ \varepsilon_{l^{j+1}_{1}}+ \cdots \varepsilon_{l^{j+1}_{r_{j+1}}}$ recursively; $l_{j+1}$ is the line segment joining $\gamma_{j}$ and $\gamma_{j+1}$. Let $\Phi_{\gamma_i}^{+}:= \{(\alpha, n) \in \Phi^{+}\times \mathbb{Z} : \gamma_{i} \in \mm^{+}_{\alpha, n}\}$ and $\Phi_{\gamma_i,\gamma_{i+1}}^{+}=\{(\alpha, n) \in \Phi_{\gamma_{i}}^{+}:  \l_{i} \nsubset \mm^{-}_{\alpha, n}\}$. Fix some total order on $\Phi_{\gamma_i,\gamma_{i+1}}^{+}$ and let $\mathbb{U}_{\gamma_i}(\underline{a^{i}}):= \underset{(\alpha, n) \in \Phi^{+}_{\gamma_{i},\gamma_{i+1}}}{\prod}\U_{\alpha,n}(a^{i}_{\alpha,n})$, where $\underline{a^{i}} := (a^{i}_{\alpha, n})_{(\alpha,n) \in \Phi^{+}_{\gamma_{i},\gamma_{i+1}}} \in \mathbb{C}^{ \# \Phi^{+}_{\gamma_{i},\gamma_{i+1}}}$ and the product is taken in the chosen fixed order.  Then Proposition 4.19 in \cite{onesk} (or Corollary 3 in \cite{knuth}) says:

\begin{align*}
\pi(\C_{\gamma}) = \{\mathbb{U}_{\gamma_0}(\underline{a^{0}})\cdots \mathbb{U}_{\gamma_{r-1}}(\underline{a^{r-1}})t^{\op{wt}(\gamma)}: \underline{a^{j}} \in \mathbb{C}^{\# \Phi^{+}_{\gamma_{i},\gamma_{i+1}}}\}.
\end{align*}

Consider also, for $0 \leq k\leq r-1$ the \textsl{truncated images}

\begin{align*}
\T_{\gamma}^{\geq k} &: = \mathbb{U}_{\gamma_{k}}(\underline{a^{k}})\cdots \mathbb{U}_{\gamma_{r-1}}(\underline{a^{r-1}})t^{\op{wt}(\gamma)}\\
\T_{\gamma}^{\leq k} &: = \mathbb{U}_{\gamma_0}(\underline{a^{0}})\cdots \mathbb{U}_{\gamma_{k}}(\underline{a^{k}}).\\
\end{align*}

Given a weight $\mu \in \X$ and $(\alpha, n) \in \Phi \times \mathbb{Z}$, the relation $t^{-\mu}\U_{\alpha,n}t^{\mu} = \U_{\alpha, n - (\lambda, \alpha)}$ (see \cite{steinberg}, Section 6) implies that the group $\U_{\mu}$ generated by all the subgroups $\U_{\beta, m}$ such that $\lambda \in \mm^{-}_{\beta,m}$ stabilises $t^{\mu}$, and by Proposition 3 in \cite{knuth}, $\U_{\op{wt}(\gamma^{ < k})}\T_{\gamma}^{\geq k} = \T_{\gamma}^{\geq k}$, where $\gamma^{<k}$ is the gallery consisting of the first $k-1$ columns of $\gamma$, read from right to left. We will use this below. 

\begin{proof}[Proof of Theorem \ref{thmGLN}]
The only thing missing in the proof in \cite{knuth} is the following claim.
\begin{cl}
 Let $\gamma$ and $\delta$ be galleries, let $\underline{b}$ be the shape of $\gamma * \delta$, and $\underline{a}$ be the shape of $\gamma *\gamma_{1\cdots n}*\delta$. Then $\overline{\pi_{\underline{a}}(\C_{\gamma *\gamma_{1\cdots n}*\delta})} = \overline{\pi_{\underline{b}}(\C_{\gamma *\delta})}$. 
 \end{cl}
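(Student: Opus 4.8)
The plan is to compare the explicit product descriptions of $\pi_{\underline{a}}(\C_{\gamma *\gamma_{1\cdots n}*\delta})$ and $\pi_{\underline{b}}(\C_{\gamma *\delta})$ recalled just above the claim, factor by factor, and to show that the single extra factor contributed by the column $1\,2\,\cdots\,n$ is trivial, so that the two images already coincide as subsets of $\mathcal{G}$ before one even passes to closures.

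First I would record the numerical input that drives everything: the column $\gamma_{1\cdots n}$ has weight $\varepsilon_1+\cdots+\varepsilon_n = 0$ in $\X = \bigoplus_i \mathbb{Z}\varepsilon_i/\langle\sum_i\varepsilon_i\rangle$. Hence $\op{wt}(\gamma *\gamma_{1\cdots n}*\delta) = \op{wt}(\gamma *\delta)$, so the factors $t^{\op{wt}}$ in the two product formulas agree. Writing $s$ for the number of columns of $\delta$ and listing the path-vertices $\gamma'_0 = 0, \gamma'_1,\dots$ of $\gamma *\gamma_{1\cdots n}*\delta$ from the right, the column $\gamma_{1\cdots n}$ is the $(s+1)$-st column and adds the vector $0$, so $\gamma'_{s+1} = \gamma'_s = \op{wt}(\delta)$. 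The vertex sequence of $\gamma *\delta$ is exactly this one with the repeated vertex deleted. Comparing the products $\prod_i \mathbb{U}_{\gamma'_i}(\underline{a^i})\, t^{\op{wt}}$, every factor attached to a column of $\delta$ (indices $0,\dots,s-1$) and every factor attached to a column of $\gamma$ (the indices after the inserted column) is literally the same on both sides; the two formulas differ only by the one extra factor $\mathbb{U}_{\gamma'_s}(\underline{a^s})$ indexed by $\Phi^+_{\gamma'_s,\gamma'_{s+1}}$, the roots of the inserted column.

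It then remains to see that this extra factor is trivial, which is the heart of the matter. The indexing set $\Phi^+_{\gamma'_s,\gamma'_{s+1}}$ is \emph{finite} — this is built into the product formula, in which $\underline{a^s}$ ranges over $\mathbb{C}^{\#\Phi^+_{\gamma'_s,\gamma'_{s+1}}}$ — and geometrically it records the affine root hyperplanes $\mm_{\alpha,n}$ crossed by the step from $\gamma'_s$ to $\gamma'_{s+1}$. Since this step does not move (its line segment $l_{s+1}$ is the single point $\op{wt}(\delta)$), it crosses no hyperplane, so $\Phi^+_{\gamma'_s,\gamma'_{s+1}} = \emptyset$ and $\mathbb{U}_{\gamma'_s}(\underline{a^s})$ is an empty product, equal to the identity. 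Therefore the two product descriptions coincide, $\pi_{\underline{a}}(\C_{\gamma *\gamma_{1\cdots n}*\delta}) = \pi_{\underline{b}}(\C_{\gamma *\delta})$, and taking closures yields the claim.

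I expect the only real obstacle to be pinning down this degeneracy cleanly against the conventions fixed above: one must make sure that a weight-$0$ column genuinely contributes no root subgroup, that is, that the finite set attached to a constant step is empty rather than (as a careless reading of the half-space description might suggest) all of $\{(\alpha,n)\in\Phi^+ : (\alpha,\op{wt}(\delta)) > n\}$; the finiteness demanded by the product formula forces the former reading. Everything else is bookkeeping: the agreement of $t^{\op{wt}}$, coming from $\op{wt}(\gamma_{1\cdots n}) = 0$, and the term-by-term matching of the factors attached to the columns of $\gamma$ and of $\delta$. One could alternatively run the argument through the affine Bott--Samelson varieties, noting that for the length-$n$ column one has $\mu_{s+1} = \mu_{s}$, hence $P_{n} = Q_{n}$, so that the fibre $P_{n}/Q_{n}$ is a point and $\Sigma_{\underline{a}} \cong \Sigma_{\underline{b}}$ equivariantly and compatibly with $\pi$; but the direct computation above is shorter.
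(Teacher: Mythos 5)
Your argument collapses at its central step because of a misreading of what $\gamma_{1\cdots n}$ is. In this paper a shape has all entries at most $n-1$, so a single column with $n$ boxes is not a gallery at all; the notation $\gamma_{w}$ for a word $w$ denotes the gallery whose columns are the single boxes given by the letters of $w$. Hence $\gamma_{1\cdots n}$ is a gallery with $n$ one-box columns, and its path is not a constant step but the non-trivial loop through the vertices $p,\, p+\varepsilon_{1},\, p+\varepsilon_{1}+\varepsilon_{2},\, \dots,\, p+\varepsilon_{1}+\cdots+\varepsilon_{n}=p$ (where $p$ is the endpoint of the part of the gallery read before it). This loop crosses affine hyperplanes, so the insertion contributes $n$ factors $\mathbb{U}_{\eta_{k+1}}(\underline{a^{k+1}})\cdots \mathbb{U}_{\eta_{k+n}}(\underline{a^{k+n}})$ whose indexing sets are in general \emph{nonempty}: already for $\gamma,\delta$ trivial and $n=3$ one has $\Phi^{+}_{\eta_{0,1}}=\{(\varepsilon_{1}-\varepsilon_{2},0),(\varepsilon_{1}-\varepsilon_{3},0)\}$ and $\Phi^{+}_{\eta_{1,2}}=\{(\varepsilon_{2}-\varepsilon_{3},0)\}$. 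So the ``single extra factor'' you propose to discard is neither single nor an empty product, and the two product descriptions do \emph{not} agree factor by factor; the statement is not the triviality your proof makes it out to be.

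What actually has to be shown is precisely the step you declare vacuous: that these genuinely non-trivial unipotent factors act trivially on the rest of the product. The paper's proof does this by observing that the sets $\Phi^{+}_{\eta_{k+i},\eta_{k+i+1}}$ attached to the inserted columns are disjoint and that all the corresponding factors lie in the group $\U_{\mu}$ ($\mu$ the weight of the portion of the gallery preceding the loop) generated by the subgroups $\U_{\beta,m}$ with $\mu\in \mm^{-}_{\beta,m}$; by the relation $t^{-\mu}\U_{\alpha,m}t^{\mu}=\U_{\alpha,m-(\mu,\alpha)}$ and Proposition 3 of \cite{knuth}, this group stabilises the truncated image $\T_{\eta}^{\geq k+n+1}$, which equals $\T_{\gamma*\delta}^{\geq k+1}$ because $\op{wt}(\gamma_{1\cdots n})=0$; only then do the images coincide. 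Your fallback via Bott--Samelson varieties fails for the same reason: each inserted one-box column adds $\omega_{1}\neq 0$ to the sequence $(\mu_{i})$, so consecutive $\mu_{i}$ differ, the segments $l_{i}$ are non-degenerate, and the corresponding fibres $P/Q$ are not points (note also that $\op{SL}_{n}$ has no fundamental weight $\omega_{n}$, another sign that an $n$-box column is not an object of this model). The weight bookkeeping in your first paragraph is fine, but it is the only part of the argument that survives.
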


Let $\eta = \gamma *\gamma_{1\cdots n}*\delta$, and assume $\gamma$ has $k$ columns. Then $\pi(\C_{\eta}) = \T_{\eta}^{\leq k} \mathbb{U}_{\eta_{k+1}}(\underline{a^{k+1}}) \cdots \mathbb{U}_{\eta_{k+n}}(\underline{a^{k+n}}) \T_{\eta}^{\geq k+n+1}$. Now, note that the sets $\Phi^{+}_{\eta_{k+i, k+i+1}}$ are disjoint for $i \in \{1, \cdots, n\}$ (for example, if $\gamma$ and $\delta$ are trivial and $n=3$ then $\Phi^{+}_{\eta_{0,1}} = \{(\varepsilon_{1}-\varepsilon_{2},0), (\varepsilon_{1}-\varepsilon_{3},0)\}, \Phi^{+}_{\eta_{1,2}} = \{(\varepsilon_{2}-\varepsilon_{3},0)\}, \Phi^{+}_{\eta_{2,3}} = \emptyset$). This implies that for $i \in \{1, \cdots, n\}$ the products $\mathbb{U}_{\eta_{k+i}}(\underline{a^{k+i}})$ all belong to the group $\U_{\op{wt}(\gamma)}$, which stabilises $\T_{\eta}^{\geq k+n+1}$. Since $\op{wt}(\gamma_{1\cdots n}) = 0, \T_{\eta}^{\geq k+n+1} = \T_{\gamma * \delta}^{\geq k+1}$.  Hence $\pi(\C_{\eta}) = \T_{\eta}^{\leq k} \T_{\eta}^{\geq k+n+1} = \T_{\gamma * \delta}^{\leq k}\T_{\gamma * \delta}^{\geq k+1} = \pi_{\underline{b}}(\C_{\gamma *\delta})$ and the claim follows. 
%

\end{proof}


\begin{thebibliography}{GLN13}

\bibitem[BG01]{bravermangaitsgory}
Alexander Braverman and Dennis Gaitsgory.
\newblock Crystals via the affine {G}rassmannian.
\newblock In {\em Duke Math. J.}, 107(3):561--575, 2001.

\bibitem[BG08]{baumanngaussent}
Pierre Baumann and St{\'e}phane Gaussent.
\newblock On {M}irkovi\'c-{V}ilonen cycles and crystal combinatorics.
\newblock In {\em Represent. Theory}, 12:83--130, 2008.

\bibitem[GL05]{ls}
S.~Gaussent and P.~Littelmann.
\newblock L{S} galleries, the path model, and {MV} cycles.
\newblock In {\em Duke Math. J.}, 127(1):35--88, 2005.

\bibitem[GL12]{onesk}
St{\'e}phane Gaussent and Peter Littelmann.
\newblock One-skeleton galleries, the path model, and a generalization of
  {M}acdonald's formula for {H}all-{L}ittlewood polynomials.
\newblock In {\em Int. Math. Res. Not. IMRN}, (12):2649--2707, 2012.

\bibitem[GLN13]{knuth}
St{\'e}phane Gaussent, Peter Littelmann, and An~Hoa Nguyen.
\newblock Knuth relations, tableaux and {MV}-cycles.
\newblock In {\em J. Ramanujan Math. Soc.}, 28A:191--219, 2013.

\bibitem[HK02]{hongandkang}
Jin Hong and Seok-Jin Kang
\newblock Introduction to quantum groups and crystal bases.
\newblock {\em Graduate Studies in Mathematics}, American Mathematical Society, Providence, RI, 2002.

\bibitem[Kas95]{kashiwaraoncrystalbases}
Masaki Kashiwara.
\newblock On crystal bases.
\newblock In {\em Representations of groups ({B}anff, {AB}, 1994)}, volume~16
  of {\em CMS Conf. Proc.}, pages 155--197. Amer. Math. Soc., Providence, RI,
  1995.
  
\bibitem[Kas91]{kashiwaraoncrystalbasesoriginal}
Masaki Kashiwara.
\newblock On Crystal bases of the $\Q$-analogue of universal enveloping algebras
\newblock In {\em Duke Math J.}, 63(2), 1991.

\bibitem[KN94]{kashiwaranakashima}
Masaki Kashiwara and Toshiki Nakashima.
\newblock Crystal graphs for representations of the {$q$}-analogue of classical {L}ie algebras.
\newblock In {\em J. Algebra}, \textbf{165}(2), 295--345, 1994.
  
\bibitem[Knu70]{knuthrelations}
Donald E. Knuth.
\newblock Permutations, matrices, and generalised Young tableaux. 
\newblock In {\em Pac. J. of Math. }, 34 (3): 709 -- 727

\bibitem[Kum02]{kumar}
Shrawan Kumar.
\newblock Kac-{M}oody groups, their flag varieties and representation theory. 

\bibitem[LS91]{lsoriginal}
Venkatraman Lakshmibai and Conjeervaram Srirangachari Seshadri.
\newblock Standard monomial theory.
\newblock In {\em Proceedings of the Hyderabad Conference on Algebraic Groups}, Manoj. Prakashan, Madras, (1991), 279-323.

\bibitem[Lit94]{lspaths}
Peter Littelmann. 
\newblock A Littlewood-Richardson rule for symmetrizable Kac-Moody algebras. 
\newblock In {\em Invent. Math.}, 116(1-3):329--346, 1994.

\bibitem[Lit96]{placticalgebra}
Peter Littelmann.
\newblock A plactic algebra for semisimple {L}ie algebras.
\newblock In {\em Adv. Math.}, 124(2):312--331, 1996.

\bibitem[Lit95]{pathmodel}
Peter Littelmann.
\newblock Paths and root operators in representation theory.
\newblock In {\em Ann. of Math. (2)}, 142(3): 499--525, 1995. 


\bibitem[MV07]{mirkovicvilonen}
Ivan Mirkovi{\'c} and Kari Vilonen.
\newblock Geometric {L}anglands duality and representations of algebraic groups
  over commutative rings.
\newblock In {\em Ann. of Math. (2)}, 166(1):95--143, 2007.

\bibitem[Ron09]{ronan}
Mark Ronan.
\newblock Lectures on buildings.
\newblock University of Chicago Press, Chicago, IL (2009).

\bibitem[S61]{schensted}
C. Schensted.
\newblock Longest increasing and decreasing subsequences.
\newblock In {\em Canadian Journal of Mathematics}, 13:179-191, 1961.

\bibitem[St68]{steinberg}{
Robert Steinberg.
\newblock Lectures on Chevalley groups. 
\newblock In {\em Yale University, New Haven, Conn.}, 1968
}

\end{thebibliography}
\end{document}